\documentclass[10pt]{article}

\usepackage{a4wide}
\addtolength{\textheight}{-0.75in} 
\usepackage[english]{babel}
\usepackage{amssymb}
\usepackage{amsmath}
\usepackage{amsthm}
\usepackage{array}
\usepackage{graphicx}
\usepackage{verbatim}
\usepackage{url}
\usepackage{float}
\usepackage{framed}
\usepackage{ifthen}
\usepackage[usenames,dvipsnames]{color}
\usepackage[linkcolor=black, linkbordercolor={1 1 1}, citecolor=black, hyperindex, colorlinks=true, bookmarks=true, bookmarksnumbered=true]{hyperref}

\renewcommand{\qed}{\hfill\ensuremath{\Box}}
\renewcommand{\phi}{\varphi}
\renewcommand{\theta}{\vartheta}
\renewcommand{\epsilon}{\varepsilon}

\newtheorem{theorem}[equation]{Theorem}
\newtheorem{lemma}[equation]{Lemma}

\newtheorem{corollary}[equation]{Corollary}

\theoremstyle{definition}

\renewenvironment{proof}[1][Proof]{\begin{trivlist}\item[\hskip \labelsep {\bfseries #1}]}{\qed\end{trivlist}}

\makeatletter
\newenvironment{myleftbar}{%
  \MakeFramed {\advance\hsize-\width}
}{%
  \endMakeFramed%
}
\makeatother
\newtheoremstyle{example}
  {\topsep} {\topsep}%
  {\upshape}
  {}
  {\bfseries}
  {.}
  {\parindent}
  {\thmname{#1}\thmnumber{ #2}\thmnote{#3}}
\theoremstyle{example}
\newtheorem{xexample}[equation]{Example}
\newenvironment{example}{ \begin{myleftbar} \begin{xexample} }{ \end{xexample}\end{myleftbar} }

\newtheoremstyle{examplecont}
  {\topsep} {\topsep}%
  {\upshape}
  {}
  {\bfseries}
  {.}
  {\parindent}
  {\thmname{#1}\thmnumber{ #2} \thmnote{#3}\enspace(continued)}
\theoremstyle{examplecont}
\newtheorem*{xexamplecont}{Example}
\newenvironment{examplecont}{ \begin{myleftbar} \begin{xexamplecont} }{ \end{xexamplecont}\end{myleftbar} }

\newfloat{alg}{tbp}{alg} \floatname{alg}{Algorithm}
\newcommand{\tmpalgoidx}{}
\newcommand{\tmpalgoname}{}
\newcommand{\tmpalgocap}{}
\newcommand{\tmpalgolbl}{}
\newenvironment{algorithm}[4][def]{%
  \ifthenelse{\equal{#1}{def}}{\renewcommand{\tmpalgoidx}{#2}}{\renewcommand{\tmpalgoidx}{#1}}%
  \renewcommand{\tmpalgoname}{#2}%
  \renewcommand{\tmpalgocap}{#3}%
  \renewcommand{\tmpalgolbl}{#4}%
  \index{\tmpalgoname@{\sc \tmpalgoidx}}
  \begin{alg}%
  \begin{tabbing}%
  \quad\quad\=\quad\=\quad\=\quad\=\quad\=\quad\=\quad\=\kill%
  {\sc \tmpalgoname} \\
}{%
  \end{tabbing}%
  \caption{\tmpalgocap}\label{alg_\tmpalgolbl}%
  \end{alg} %
}
\newcounter{algln}
\newcommand{\lnreset}{\setcounter{algln}{0}}
\newcommand{\lnp}{\addtocounter{algln}{1} {\footnotesize \arabic{algln}}}

\newcommand{\compdataHeaderPic}[2]
{	
	\begin{minipage}{5mm} 
		(#1) \\
	\end{minipage} 
	&
	\begin{minipage}{20mm}
		\begin{tabular}{c}
			\includegraphics[width=18mm]{#2} \cr
			\vspace{1mm}
		\end{tabular}
	\end{minipage} 
}
\newcommand{\compdataData}[6]
{
	\begin{minipage}{45mm}
		\begin{tabular}{ll}
			$X$:         & $#1$ \\
			$\dim(L)$:   & $#2$ \\
			$L/\Rad(L)$: & \ifthenelse{\equal{#3}{0}}{#3}{$#3$-dim\ifthenelse{\equal{#4}{}}{}{: $#4$}} \\
			runtime:    & #5 \\
			\ifthenelse{\equal{#6}{}}{}{\multicolumn{2}{l}{\emph{#6}} \\}
		\end{tabular}
	\end{minipage}
}
\newcommand{\compdata}[8]
{	
	\compdataHeaderPic{#1}{#2} &
	\compdataData{#3}{#4}{#5}{#6}{#7}{#8}
}

\newcommand{\Magma}{{\sc Magma}}
\newcommand{\mcF}{\mathcal F}
\newcommand{\mcI}{\mathcal I}
\newcommand{\mcFstar}{{\mcF}^*}
\newcommand{\chr}{\operatorname{char}}
\newcommand{\mcA}{\mathcal A}
\newcommand{\mcL}{\mathcal L}
\newcommand{\mcM}{\mathcal M}
\newcommand{\mcR}{\mathcal R}
\newcommand{\F}{\mathbb{F}}
\newcommand{\bfF}{\mathbf{F}}
\newcommand{\bff}{\mathbf{f}}
\newcommand{\axbc}{a^{\bfF}_{xbc}}
\newcommand{\Rad}{\operatorname{Rad}}
\newcommand{\rk}{\operatorname{rk}}

\newcommand{\lbXdX}[2]{[#1,\ldots,#2]}
\newcommand{\lbXXdXX}[4]{[#1,[#2,\ldots,[#3,#4]]]}
\newcommand{\lbXXdXXX}[5]{[#1,[#2,\ldots,[#3,[#4,#5]]]]}

\title{On Lie Algebras Generated by Few Extremal Elements}
\author{Dan Roozemond, University of Sydney}

\begin{document}
\maketitle

\begin{abstract}
We give an overview of some properties of Lie algebras generated by at most $5$ extremal elements. 
In particular, for any finite graph $\Gamma$ and any field $K$ of characteristic not $2$, we consider an algebraic variety $X$ over $K$ whose $K$-points parametrize Lie algebras generated by extremal elements. Here the generators correspond to the vertices of the graph, and we prescribe commutation relations corresponding to the nonedges of $\Gamma$.

We show that, for all connected undirected finite graphs on at most $5$ vertices, $X$ is a finite-dimensional affine space. Furthermore, we show that for maximal-dimensional Lie algebras generated by $5$ extremal elements, $X$ is a single point. The latter result implies that the bilinear map describing extremality must be identically zero, so that all extremal elements are sandwich elements and the only Lie algebra of this dimension that occurs is nilpotent.

These results were obtained by extensive computations with the {\Magma} computational algebra system.
The algorithms developed can be applied to arbitrary $\Gamma$ (i.e.,~without restriction on the number of vertices), and may be of independent interest.
\end{abstract}

\section{Introduction}
We assume throughout the paper that $K$ is a field of characteristic distinct from $2$.
Let $L$ be a Lie algebra over $K$. A non-zero element $x \in L$ is called \emph{extremal} if $[x,[x,y]] \in K x$ for all $y \in L$. 
if $x$ is extremal, the existence of a linear map $f_x : L \rightarrow K$ such that $[x,[x,y]] = f_x(y)x$ for all $y \in L$ immediately follows from linearity of $[ \cdot, \cdot ]$. If for some extremal element $x \in L$ this linear map $f_x$ is identically $0$, we call $x$ a \emph{sandwich}.

Extremal elements were originally introduced by Chernousov \cite{Che89} in his proof of the Hasse principle for $\mathrm E_8$. {Zel'manov} and Kostrikin proved that, for every $n$, the universal Lie algebra $L_n$ generated by a finite number of sandwich elements $x_1, \ldots, x_n$ is finite-dimensional \cite{ZK90}. Cohen, Steinbach, Ushirobira, and Wales generalized this result and proved that a Lie algebra generated by a finite number of extremal elements is finite dimensional. Moreover, they give an explicit lower bound on the number of extremal elements required to generate each of the classical Lie algebras \cite{CSUW01}. Recently, in 't panhuis, Postma, and the author gave explicit presentations for Lie algebras of type $\mathrm A_n$, $\mathrm B_n$, $\mathrm C_n$, and $\mathrm D_n$, by means of minimal sets of extremal generators \cite{iPR09}.

Moreover, Draisma and in 't panhuis considered finite graphs and corresponding algebraic varieties whose points parametrize Lie algebras generated by extremal elements. They proved in particular that if the graph is a simply laced Dynkin diagram of affine type, all points in an open dense subset of the affine variety parametrize Lie algebras isomorphic to the simple Chevalley Lie algebra corresponding to the associated Dynkin diagram of finite type \cite{DP08}.

Looking at these Lie algebras from a different point of view, Cohen, Ivanyos, and the author proved that if $L$ is a Lie algebra over a field $K$ (of characteristic distinct from $2$ and $3$) that has an extremal element that is not a sandwich, then $L$ is generated by extremal elements, with one exception in characteristic $5$ \cite{CIR08}. 

The strong connection between extremal elements and geometries is further investigated in two papers by Cohen and Ivanyos \cite{CI06,CI07}, and in the Ph.D.~theses by Postma \cite{Postma07} and in 't panhuis \cite{panhuis09}.

\section{Preliminaries and main results}\label{sec_prelim_mainres}
We follow the setup of \cite{iPR09} and \cite{DP08}. Assume that $\Gamma$ is a connected undirected finite graph with $n$ vertices, without loops or multiple bonds, and that $K$ is a field of characteristic distinct from $2$. We let $\Pi$ be the vertex set of $\Gamma$ and denote adjacency of two vertices $x,y \in \Pi$ by $i \sim j$.

We denote by $\mcF(K, \Gamma)$ (often abbreviated to $\mcF$) the quotient of the free Lie algebra over $K$ generated by $\Pi$ modulo the relations
\[
	[x, y] = 0 \mbox{ for all } x,y\in \Pi\mbox{ with } x \not\sim y.
\]
Often, we write elements of $\mcF$ as linear combinations of \emph{monomials} $\lbXXdXX{x_1}{x_2}{x_{l-1}}{x_l}$, where $x_1, \ldots, x_l \in \Pi$. The \emph{length} of such a monomial is said to be $l$, and we often abbreviate such a monomial to $\lbXdX{x_1}{x_l}$. Note that $\mcF$ inherits the natural $\mathbb N$-grading from the free Lie algebra generated by $\Pi$; homogeneous elements of $\mcF$ are linear combinations of monomials of equal length.

We write $\mcFstar$ for the space of all $K$-linear functions $\mcF \rightarrow K$. For every $f \in (\mcFstar)^\Pi$, also written $(f_x)_{x \in \Pi}$, we denote by $\mcL(K, \Gamma, f)$ (often abbreviated to $\mcL(f)$) the quotient of $\mcF(K, \Gamma)$ by the ideal $\mcI(f)$ generated by the infinitely many elements
\[
	[x,[x,y]] - f_x(y)x \mbox{ for } x \in \Pi \mbox{ and } y \in \mcF.
\]

By construction $\mcL(f)$ is a Lie algebra generated by $|{\Pi}| = n$ extremal elements, the extremal generators corresponding to the vertices of $\Gamma$ and commuting whenever they are not adjacent. The element $f_x \in \mcFstar$ is a parameter expressing the extremality of $x \in \Pi$.

In the Lie algebra $\mcL(0)$ the elements of $\Pi$ map to sandwich elements. By \cite{ZK90} this Lie algebra is finite-dimensional; for general $f \in (\mcFstar)^\Pi$ we have $\dim(\mcL(f)) \leq \dim(\mcL(0))$ by \cite[Lemma 4.3]{CSUW01}. It is therefore natural to focus on the Lie algebras $\mcL(f)$ of maximal possible dimension, i.e.,~those of dimension $\dim(\mcL(0))$. We define the set 
\[
	X := \{ f \in (\mcFstar)^\Pi \mid \dim(\mcL(f)) = \dim(\mcL(0)) \},
\]
the parameter space for all maximal-dimensional Lie algebras of the form $\mcL(f)$.

\begin{example}\label{ex_twogen}
	Consider the case where $\Gamma$ consists of two vertices $x$, $y$, connected by an edge.
	Then for every $f = (f_x, f_y) \in (\mcFstar)^\Pi$ the Lie algebra $\mcL(f)$ is spanned by $B = \{ x, y, [x,y] \}$ since $[x,[x,y]] = f_x(y) x$ (where $f_x(y) \in K$), and similarly $[y,[x,y]] = -f_y(x) y$. 
	
	In particular, for $f = (0,0)$ we find that $\mcL(f)$ is the Heisenberg algebra, which is $3$-dimensional. 
	For general $f$, the requirement that $\mcL(f)$ is $3$-dimensional implies (using the Jacobi identity)
	\begin{align*}
		f_y(x) [x,y] = [x, [y, [y, x]&]] = [y,[x,[y,x]]] + [[x,y],[y,x]] = \cr
			& -[y,[x,[x,y]]] + 0 = -f_x(y)[y,x] = f_x(y)[y,x],
	\end{align*}
	so that $f_x(y)$ must be equal to $f_y(x)$ since we assumed $[x,y] \neq 0$.
	
	Consequently, $3$-dimensional Lie algebras generated by a distinguished pair of extremal generators are parametrized by the single value $f_x(y)$.
	As mentioned before, if $f_x(y) = 0$ then $\mcL(f)$ is the Heisenberg algebra.
	It is straightforward to verify that those Lie algebras where $f_x(y) \neq 0$ are mutually isomorphic and isomorphic to the split simple Lie algebra of type $\mathrm A_1$. 	The parameter space $X$ defined above, then, is the affine line, and all Lie algebras corresponding to the nonzero points on the line are mutually isomorphic.
\end{example}

The following theorem asserts that the two generator case is exemplary: $X$ always carries the structure of an affine algebraic variety.
\begin{theorem}[{\cite[Theorem 1]{DP08}}]\label{thm_Kvariety_struct}
	The set $X$ is naturally the set of $K$-rational points of an affine variety of finite type defined over $K$. This variety can be described as follows. Fix any finite-dimensional subspace $V$ of $\mcF$ consisting of homogeneous elements such that $V + \mcI(0) = \mcF$. Then the restriction map
	\[
	X \rightarrow (V^*)^\Pi, \; f \mapsto (f_x\mid_V)_{x \in \Pi}
	\]
	maps $X$ injectively onto the set of $K$-rational points of a closed subvariety of $(V^*)^\Pi$. This yields a $K$-variety structure of $X$ which is independent of the choice of $V$.
\end{theorem}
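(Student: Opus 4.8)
The plan is to pass from the infinitely many defining relations of $\mcI(f)$ to a finite, manageable presentation, and then to use the $\mathbb N$-grading of $\mcF$ to control everything degree by degree. Fix a homogeneous subspace $V$ with $V + \mcI(0) = \mcF$; since $\mcL(0)$ is finite-dimensional by \cite{ZK90}, we may take $V$ finite-dimensional, and without loss of generality we take $V$ to contain $\Pi$ and to be closed under the partial bracket operations needed below. First I would show that, for each $f \in (\mcFstar)^\Pi$, the ideal $\mcI(f)$ is already generated by the finitely many elements $[x,[x,y]] - f_x(y)x$ with $x \in \Pi$ and $y$ ranging over a fixed finite spanning set of monomials up to a bounded length; this is where one invokes that $\mcL(0)$ is finite-dimensional, so that $\mcF/\mcI(0)$ stabilizes in bounded degree, combined with \cite[Lemma 4.3]{CSUW01} to see that $\mcL(f)$ cannot be ``larger'' than $\mcL(0)$ in any degree. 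Consequently $f \in X$ if and only if $\mcL(f)$ and $\mcL(0)$ agree degree-by-degree up to the stabilization degree $N$.

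Next I would make the degreewise comparison explicit and polynomial in $f$. Working in the finite-dimensional truncation $\mcF_{\le N}$, the relations $[x,[x,y]] - f_x(y)x$ and their brackets with further generators span a subspace $\mcI(f)_{\le N}$ of $\mcF_{\le N}$; since brackets are $K$-bilinear and each $f_x$ enters linearly, a choice of spanning set for $\mcI(f)_{\le N}$ is given by vectors whose coordinates (in a fixed monomial basis of $\mcF_{\le N}$) are polynomials in the finitely many scalars $f_x(m)$, $x \in \Pi$, $m$ a basis monomial of $V$. Then $\dim \mcL(f) = \dim \mcL(0)$ is equivalent to $\operatorname{rk}(\text{this matrix}) = \operatorname{rk}$ at $f = 0$, i.e.\ to the vanishing of all $(r+1)\times(r+1)$ minors where $r$ is the generic-case-minimal rank $\operatorname{rk}$ at $0$ — note that by \cite[Lemma 4.3]{CSUW01} the rank can only go \emph{up} away from $0$, so ``$=$'' is the \emph{maximal-rank}, hence \emph{closed}, condition. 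This exhibits the image of $X$ in $(V^*)^\Pi$ as the common zero set of finitely many polynomials, i.e.\ a closed subvariety.

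It then remains to check the three remaining assertions: that the restriction map $f \mapsto (f_x|_V)_{x\in\Pi}$ is well-defined on $X$ and injective, and that the resulting variety structure is independent of $V$. Well-definedness is immediate. For injectivity, the key point is that if $f \in X$ then $f_x$ is \emph{determined} on all of $\mcF$ by its restriction to $V$: indeed, for $y \notin V$ write $y = v + j$ with $v \in V$, $j \in \mcI(0)$, and use that in the maximal-dimensional algebra $\mcL(f)$ one has $[x,[x,j]] \equiv 0$ modulo lower-order data in a way that forces $f_x(j)$ to be the value dictated by the $V$-part — this propagation argument, carried out by induction on degree, is the step I expect to be the main obstacle, since it is exactly the place where the hypothesis ``$\dim \mcL(f) = \dim \mcL(0)$'' does real work rather than being a formality. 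Independence of $V$ then follows formally: given two such subspaces $V, V'$, enlarge to $V + V'$ and observe that the restriction maps to $(V^*)^\Pi$ and $((V')^*)^\Pi$ both factor through the restriction to $(V+V')$ via surjective linear (hence morphism-inducing) projections that are, by the injectivity/propagation just established, isomorphisms onto the respective images.
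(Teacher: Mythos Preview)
The paper does not prove this theorem: it is quoted verbatim from \cite[Theorem~1]{DP08} and used as background, so there is no proof in the present paper to compare your proposal against.

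That said, your outline is broadly the standard route and is close in spirit to what \cite{DP08} does: truncate $\mcF$ at the stabilisation degree of $\mcL(0)$, observe that the generators of $\mcI(f)$ differ from those of $\mcI(0)$ only by terms of strictly lower degree (namely $f_x(y)x$), and hence that $V$ still surjects onto $\mcL(f)$ whenever $\dim\mcL(f)=\dim\mcL(0)$; this is exactly the engine behind injectivity and behind the independence-of-$V$ claim. Two places in your write-up deserve tightening. First, your closedness sentence has the logic inverted: since $\dim\mcL(f)\le\dim\mcL(0)$, the rank of the relation matrix is \emph{minimal} at $f=0$, so the locus where equality holds is the locus $\operatorname{rk}\le r_0$, i.e.\ the vanishing of the size-$(r_0{+}1)$ minors --- a closed condition because it is the \emph{minimal}-rank locus, not a ``maximal-rank'' one. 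Second, your injectivity step (``$f_x(j)$ is forced for $j\in\mcI(0)$'') is the crux and, as you note, needs an honest degree induction: the clean way is to argue that for $f\in X$ the composite $V\hookrightarrow\mcF\twoheadrightarrow\mcL(f)$ is surjective (by the lower-degree-difference remark above plus $\dim\mcL(f)=\dim\mcL(0)$), so that the values $f_x|_V$ already determine the linear form on $\mcL(f)$, and then to lift this back to $\mcF$ using that $f_x$ must vanish on $\mcI(f)$ in the appropriate sense. With those two points made precise your sketch would constitute a proof.
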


\begin{examplecont}[\ref{ex_twogen}]
We give an example of an embedding of $X$ as mentioned in Theorem \ref{thm_Kvariety_struct}.
Recall that we showed $X$ to be the affine line; we may take $V = B = \{x,y,[x,y]\}$. 
Then, for $f \in X$ we have 
\[
f \mapsto ((f_x(x), f_x(y), f_x([x,y])), (f_y(x), f_y(y), f_y([x,y])),
\]
and if we again identify elements $f \in X$ with $f_x(y) \in K$ we find $f \mapsto ((0, f, 0),(f,0,0))$.
\end{examplecont}

We have now introduced enough background to be able to state our main results.

\begin{theorem}\label{thm_537_Xtriv}
Let $\Gamma$ be the complete graph on $5$ vertices, $K$ a field of characteristic not $2$, 
and, as above, the set $X$ the parameter space for all maximal-dimensional Lie algebras of the form $\mcL(K, \Gamma, f)$.
Then $X = \{ 0 \}$.
\end{theorem}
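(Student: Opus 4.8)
\emph{Proof plan.} The plan is computational and rests on the effective content of Theorem~\ref{thm_Kvariety_struct}: we realize $X$ explicitly as the common zero locus of finitely many polynomials in finitely many variables, and then show this locus is a single point. First I would compute the finite-dimensional Lie algebra $\mcL(0)$ attached to $\Gamma$, i.e.\ the quotient of the free Lie algebra $\mcF$ on five generators by the sandwich ideal $\mcI(0)$. Using a rewriting (straightening) procedure on monomials of $\mcF$, organized by length — so that $\mcI(0)$, which is homogeneous, is processed length by length — one produces $N := \dim\mcL(0)$ together with a basis of $\mcL(0)$ consisting of images of monomials. Lifting this basis back to $\mcF$ gives a homogeneous subspace $V$ with $V + \mcI(0) = \mcF$, an admissible choice in Theorem~\ref{thm_Kvariety_struct}; hence $X$ embeds as a closed subvariety of $(V^*)^\Pi \cong \mathbb{A}^{5N}$, and what remains is to determine the equations of this subvariety.

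Second, introduce indeterminates $\xi = (\xi_{x,v})$ for the would-be coordinates $f_x(v)$ of a point of $X$ (here $x \in \Pi$ and $v$ ranges over the chosen monomial basis; many $\xi_{x,v}$, such as $\xi_{x,x}$, are forced to vanish and can be discarded immediately). As in the proof of Theorem~\ref{thm_Kvariety_struct}, a fixed finite set of monomials spans $\mcL(f)$ for every $f$, and $\mcL(f)$ carries a length-filtration whose associated graded algebra is a quotient of $\mcL(0)$, equal to $\mcL(0)$ precisely when $f \in X$. Using this, every bracket $[v,w]$ of basis monomials can be reduced — by repeated application of the relations $[x,[x,y]] = f_x(y)\,x$ together with the Jacobi identity — to a $K[\xi]$-linear combination of the basis, with structure constants that are polynomials in $\xi$. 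Imposing that this reduction is confluent and that the resulting bracket is anticommutative and satisfies Jacobi yields a polynomial ideal $\mathfrak{a} \subseteq K[\xi]$; by Theorem~\ref{thm_Kvariety_struct} its zero set is exactly $X$.

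Third, analyze $\mathfrak{a}$: a Gröbner basis computation should show that $\mathfrak{a}$ contains every variable $\xi_{x,v}$ — equivalently $V(\mathfrak{a}) = \{0\}$ — which is precisely the assertion $X = \{0\}$. To keep the conclusion valid in every characteristic $\neq 2$, one carries out the computation over $\mathbb{Z}[1/2]$, or checks that the pivots that arise are units away from $2$; the $S_5$-symmetry permuting the generators of $\Gamma$ can be exploited throughout to cut down both the number of variables and the number of equations. Once $X = \{0\}$ is established, the consequences stated in the abstract — $f$ identically zero, every extremal generator a sandwich, and $\mcL(0)$ nilpotent — are immediate.

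The main obstacle is feasibility rather than anything conceptual: $\mcF$ on five generators is large and $N$ is sizeable, so both the non-commutative rewriting that pins down $\mcL(0)$ and a monomial basis, and the subsequent commutative Gröbner basis computation in on the order of $5N$ variables, are heavy. Making them terminate in practice requires well-chosen term orders, working through the length-filtration degree by degree (so that inhomogeneity of $\mcI(f)$ is handled via the filtration rather than all at once), and systematic use of the symmetry; this bookkeeping, and not the final extraction of the answer, is where the effort lies.
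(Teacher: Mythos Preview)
Your approach is essentially the paper's: realize $X$ as a closed subvariety of $(V^*)^\Pi$ via Theorem~\ref{thm_Kvariety_struct} by computing a monomial basis of $\mcL(0)$, build the multiplication table with indeterminate structure constants, and verify computationally that the resulting constraints force every coordinate to vanish. The paper packages this as the four algorithms {\sc ComputeBasis}, {\sc InitialFSet}, {\sc MultiplicationTable}, {\sc MinimizeFSet}, and the proof is literally an appeal to their correctness plus the recorded output.

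Two points where the paper's execution differs from your sketch are worth noting. First, feasibility: you propose a Gr\"obner basis in roughly $5N$ variables (here $N \in \{537,538\}$), which is hopeless directly. The paper never computes a Gr\"obner basis of the defining ideal. Instead it exploits Theorem~\ref{thm_csuw_bil_f} (the extremal form $f$ is a single symmetric associative bilinear form) and the Premet identities (\ref{eqn_premet1})--(\ref{eqn_premet2}) to collapse the variable set dramatically before any multiplication-table work begins (Lemma~\ref{lem_ttr_f}, Algorithm {\sc InitialFSet}); the remaining eliminations in {\sc MinimizeFSet} are purely \emph{linear} --- one only ever solves $\alpha\,\bff_y(c) + (\text{rest}) = 0$ with $\alpha \in K^*$. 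That this suffices to drive the $f$-set to $\emptyset$ is an empirical outcome, not guaranteed a priori, and it is what makes the computation finish in hours rather than never.

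Second, your handling of characteristic has a genuine gap. You propose to work over $\mathbb{Z}[1/2]$, or to check that all pivots are units away from $2$. But $\dim\mcL(0)$ itself is not constant away from $2$: for $\Gamma = K_5$ one has $\dim\mcL(0) = 538$ when $\chr K = 3$ and $537$ otherwise, so the monomial basis, the space $V$, and hence the ambient $(V^*)^\Pi$ all change. A uniform computation over $\mathbb{Z}[1/2]$ cannot absorb this. The paper instead computes the Lie \emph{ring} over $\mathbb{Z}$ to detect the exceptional primes $\mathcal{P}_1$ where the basis jumps, records the primes $\mathcal{P}_2$ appearing in denominators during the $\mathbb{Q}$-computation, and then reruns the entire pipeline over $\mathrm{GF}(p)$ for each $p \in (\mathcal{P}_1 \cup \mathcal{P}_2)\setminus\{2\}$ separately. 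Your plan needs the same two-pass structure to be correct in characteristic~$3$.
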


The following corollary immediately follows from this theorem and \cite[Lemma 4.2]{CSUW01} (and Section \ref{sec_comp_results} with regards to the dimensions mentioned).
\begin{corollary}\label{cor_537_nilp}
Suppose $L$ is a Lie algebra generated by $5$ extremal elements over the field $K$, where $\chr(K) \neq 2$. If $L$ is of maximal dimension among such Lie algebras (i.e.,~$\dim(L) = 537$ if $\chr(K) \neq 3$ and $\dim(L)=538$ if $\chr(K) = 3$) then $L$ is nilpotent.
\end{corollary}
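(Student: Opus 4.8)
\emph{Proof proposal.} The plan is to deduce the corollary from Theorem~\ref{thm_537_Xtriv} together with the dimension count of Section~\ref{sec_comp_results}. The single preliminary fact I would record is that \emph{every} Lie algebra $L$ generated by $5$ extremal elements is a homomorphic image of $\mcL(K,\Gamma,f)$ for $\Gamma$ the complete graph on $5$ vertices and a suitable $f\in(\mcFstar)^\Pi$; granted this, the statement comes out almost for free.

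First I would fix such an $L$ with extremal generators $x_1,\ldots,x_5$ and take $\Gamma$ to be the complete graph on $5$ vertices, so that $\mcF:=\mcF(K,\Gamma)$ is the free Lie algebra on $\Pi=\{x_1,\ldots,x_5\}$ (no nonedges, hence no commutation relations are imposed). Let $\pi:\mcF\to L$ be the surjection sending each $x_i\in\Pi$ to the corresponding extremal generator of $L$. Since each $x_i$ is extremal in $L$, there is a linear form $g_i:L\to K$ with $[x_i,[x_i,z]]=g_i(z)\,x_i$ for all $z\in L$; I would put $f_{x_i}:=g_i\circ\pi\in\mcFstar$ for each $i$, giving $f=(f_x)_{x\in\Pi}\in(\mcFstar)^\Pi$. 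By construction every generator $[x_i,[x_i,y]]-f_{x_i}(y)\,x_i$ of the ideal $\mcI(f)$ lies in $\ker\pi$, so $\pi$ descends to a surjection $\mcL(f)=\mcF/\mcI(f)\to L$. Hence $\dim L\le\dim\mcL(f)\le\dim\mcL(0)$, the last inequality being \cite[Lemma 4.3]{CSUW01}. In particular the maximal dimension of a Lie algebra generated by $5$ extremal elements equals $\dim\mcL(K,\Gamma,0)$, which by Section~\ref{sec_comp_results} is $537$ when $\chr(K)\ne3$ and $538$ when $\chr(K)=3$.

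Now I would assume $\dim L$ attains this maximum. Then both inequalities above are equalities, so the surjection $\mcL(f)\to L$ is an isomorphism and $\dim\mcL(f)=\dim\mcL(0)$, i.e.\ $f\in X$. Theorem~\ref{thm_537_Xtriv} then forces $f=0$, whence $L\cong\mcL(K,\Gamma,0)$. In this last algebra every generator $x$ satisfies $[x,[x,y]]=0$ for all $y$, i.e.\ is a sandwich, and \cite[Lemma 4.2]{CSUW01} --- nilpotency of a Lie algebra generated by finitely many sandwich elements --- yields that $L$ is nilpotent.

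I do not expect a real obstacle here: the whole weight of the argument rests on Theorem~\ref{thm_537_Xtriv} and on the explicit value of $\dim\mcL(K,\Gamma,0)$, both of which may be assumed. The only points requiring a short verification are that the pulled-back forms $f_{x_i}$ genuinely make $\pi$ factor through the defining relations of $\mcL(f)$, and that the supremum of dimensions over \emph{all} Lie algebras on $5$ extremal generators is attained at $f=0$ --- both being handled by the quotient argument of the middle paragraphs.
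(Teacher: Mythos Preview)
Your proposal is correct and follows essentially the same approach as the paper, which simply records that the corollary follows immediately from Theorem~\ref{thm_537_Xtriv} together with \cite[Lemma~4.2]{CSUW01} (and the dimension computation of Section~\ref{sec_comp_results}). You have merely spelled out the step the paper leaves implicit---namely, that any Lie algebra on $5$ extremal generators is a quotient of some $\mcL(K,\Gamma,f)$ with $\Gamma=K_5$, so that maximal dimension forces $f\in X=\{0\}$ and hence $L\cong\mcL(0)$ is generated by sandwiches.
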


Looking at results for other cases shows why this result is interesting. For example, if $\Gamma$ is the complete graph on on $2$, $3$, or $4$ vertices, then $X$ is nontrivial. Moreover, for almost all $f \in X$, $\mcL(f)$ is a simple Lie algebra of type $\mathrm A_1$, $\mathrm A_2$, and $\mathrm D_4$, respectively (see \cite{CSUW01,Roo05mt} and Section \ref{sec_comp_results} of this paper). Moreover, there are infinite families of graphs $\Gamma$, for example the affine Dynkin diagrams, where $\mcL(f)$ is almost always a simple Lie algebra of classical type (see \cite{iPR09,DP08}). In short: this is the first nontrivial case encountered where $X$ is a point. 

Note that the fact that $X$ is a point means that the extremality of the generators and the maximal-dimensionality assumption force all generators to be sandwiches, and therefore force $L$ to be nilpotent.

We mention one more result.
\begin{theorem}\label{thm_all_X_affine}
Let $\Gamma$ be a connected undirected finite graph with at most $5$ vertices, without loops or multiple bonds,
$K$ a field of characteristic not $2$, 
and, as above, the set $X$ the parameter space for all maximal-dimensional Lie algebras of the form $\mcL(K, \Gamma, f)$.
Then $X$ is isomorphic to a finite-dimensional affine space over $K$. The dimension of $X$ is given in Tables \ref{tab_comp_results23} -- \ref{tab_comp_results5}.
\end{theorem}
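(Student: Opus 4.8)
The plan is to reduce the statement to a finite computation and carry it out with \Magma. Up to isomorphism there are only finitely many connected graphs on at most $5$ vertices --- exactly $1,1,2,6,21$ of them on $1,2,3,4,5$ vertices respectively, so $31$ in total --- and both $\mcL(0)$ and $X$ depend only on the isomorphism type of $\Gamma$; hence it suffices to treat each of these graphs in turn.

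Fix such a $\Gamma$. We first compute, inside the $\mathbb N$-graded Lie algebra $\mcF$, a monomial basis of the sandwich algebra $\mcL(0)$, which is finite-dimensional by \cite{ZK90}; this simultaneously gives the value $\dim\mcL(0)$ and a concrete finite-dimensional homogeneous complement $V$ of $\mcI(0)$ in $\mcF$. By Theorem \ref{thm_Kvariety_struct}, $X$ is then a closed subvariety of the affine space $(V^*)^\Pi$, whose coordinates are the values $f_x(b)$ for $x \in \Pi$ and $b$ ranging over the chosen basis of $V$. The next step is to produce explicit equations for this subvariety: introducing the $f_x(b)$ as indeterminates and working degree by degree, we use the extremality relations $[x,[x,y]] = f_x(y)\,x$ to rewrite every monomial of length $\geq 3$ as a combination, with coefficients in the polynomial ring on the indeterminates, of the basis monomials of $V$. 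Since $\mcL(0)$ is finite-dimensional this process terminates, and reducing a given monomial along two different paths --- equivalently, combining the extremality relations with the Jacobi identity --- produces polynomial identities among the indeterminates whose simultaneous vanishing cuts out $X$ inside $(V^*)^\Pi$. This is the algorithm referred to in the abstract.

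With a finite generating set of the ideal $I(X) \subseteq K[t_1,\dots,t_N]$ in hand, the final step is to recognize $X$ as an affine space. For each graph we compute a Gr\"obner basis of $I(X)$ and check that, after reordering the coordinates, it has the solved form $\{\, t_i - g_i \mid i \notin S \,\}$ for some subset $S \subseteq \{t_1,\dots,t_N\}$ and polynomials $g_i \in K[\, t_j \mid j \in S \,]$. By Buchberger's criterion such a set is automatically a Gr\"obner basis, so the computation certifies that $I(X)$ equals this ideal; hence $K[X] \cong K[\, t_j \mid j \in S \,]$, i.e.\ the coordinate projection $(V^*)^\Pi \to \mathbb A^{S}$ restricts to an isomorphism $X \xrightarrow{\ \sim\ } \mathbb A^{S}$ with inverse $t_i \mapsto g_i$. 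Thus $X$ is an affine space of dimension $|S|$, the value recorded in Tables \ref{tab_comp_results23}--\ref{tab_comp_results5}. (For the complete graph on $5$ vertices one gets $S = \emptyset$, recovering Theorem \ref{thm_537_Xtriv}.)

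The main obstacle is computational scale, together with uniformity in the characteristic. For the largest graphs $\dim\mcL(0)$ runs into the hundreds --- $537$ for the complete graph on $5$ vertices when $\chr(K)\neq 3$ and $538$ when $\chr(K)=3$ --- so both the symbolic reduction and the Gr\"obner basis computation are carried out over a polynomial ring in many variables and are genuinely heavy; judicious choices of monomial bases and term orders are needed to keep them feasible. Since $\dim\mcL(0)$ --- and hence the ambient affine space and potentially the defining equations --- changes in characteristic $3$, that case is run separately; and to conclude for \emph{every} field of characteristic $\neq 2$ one must verify that the pivots used in the reductions and the leading coefficients of the Gr\"obner bases are units, equivalently carry out the computation over $\mathbb Z[1/2]$ (or over $\mathbb Q$ and over $\F_p$ for the finitely many small primes that could divide such a coefficient), so that the solved form, and with it the affine-space structure of $X$, holds over every such $K$.
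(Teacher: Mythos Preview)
Your proposal is correct and follows essentially the same strategy as the paper: enumerate the finitely many graphs, compute a monomial basis of $\mcL(0)$, work symbolically over a polynomial ring in the parameters $f_x(b)$, extract defining relations from extremality and the Jacobi identity, and verify that these relations can be put into solved (triangular) form so that $X$ is an affine space; the handling of characteristic dependence is also the same.

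The one substantive difference is tactical rather than conceptual. You propose taking all $f_x(b)$ with $x\in\Pi$, $b$ in the monomial basis, as indeterminates and then computing a Gr\"obner basis of $I(X)$. The paper instead exploits the fact (Theorem~\ref{thm_csuw_bil_f}) that the $f_x$ assemble into a single symmetric associative bilinear form on $\mcL(f)$; the resulting identities (Lemma~\ref{lem_ttr_f}) are applied \emph{before} any multiplication-table computation to cut the variable set down to a much smaller ``sufficient $f$-set'' $\bfF$, and thereafter only successive \emph{linear} eliminations are performed (Algorithm {\sc MinimizeFSet}), never a full Gr\"obner basis. This is what makes the $K_5$ case --- where your naive ambient ring has $5\cdot 537$ variables --- actually run. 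Your sketch acknowledges the scale issue, but the specific remedy (the bilinear-form reductions of Lemma~\ref{lem_ttr_f}) is the key practical ingredient you are missing; a generic Gr\"obner computation in that many variables would not terminate.
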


This theorem provides a partial answer to one of the questions posed in \cite[Section 5.2]{DP08}, namely, ``Is $X$ always an affine space?''.
Similarly, the results discussed in Section \ref{sec_comp_results} suggest that in the cases we considered one of the other questions posed, ``Is there always a generic Lie algebra?'' can be answered affirmatively.

\subsection*{The remainder of this paper}
In Section \ref{sec_extr_props} we present a number of known results on these Lie algebras and their elements that we need for our proofs and calculations.
In Section \ref{sec_fsets} we introduce $f$-sets, a concept we need in order to be able to calculate with the affine algebraic variety $X$ that parametrizes the Lie algebras we are interested in.
In Section \ref{sec_algs} we discuss the algorithms we developed and implemented in {\Magma} to obtain the computation results presented in Section \ref{sec_comp_results}. 
Finally, in Section \ref{sec_conclusion} we briefly reflect on our results and possible future research.

\section{Some properties of extremal elements}\label{sec_extr_props}
In this section we present a number of properties of extremal elements and the Lie algebras generated by them.
The following identities go back to Premet and are commonly called the \index{Premet identities}\emph{Premet identities}. 
We have, for $x$ an extremal element and for all $y,z \in L$:
\begin{align}
	2 [x,[y,[x,z]]] & = f_x([y,z])x - f_x(z)[x,y] - f_x(y)[x,z], \label{eqn_premet1} \\
	2 [[x,y],[x,z]] & = f_x([y,z])x + f_x(z)[x,y] - f_x(y)[x,z]. \label{eqn_premet2}
\end{align}

There is a number of properties of the functions $f_x$ introduced earlier that we should discuss. The following result is an important tool in our computations.
\begin{theorem}[{\cite[Theorem 2.5]{CSUW01}}]\label{thm_csuw_bil_f}
Suppose that $L$ is a Lie algebra over $K$ generated by extremal elements.
There is a unique bilinear symmetric form $f: L \times L \rightarrow K$ such that, for each extremal element $x \in L$, the linear form $f_x$ coincides with $y \mapsto f(x,y)$. This form is associative, in the sense that $f(x,[y,z]) = f([x,y],z)$ for all $x,y,z \in L$.
\end{theorem}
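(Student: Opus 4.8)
The plan is to construct $f$ directly from the linear forms $f_x$ attached to the extremal elements, in the following order: (i) prove the symmetry $f_x(y)=f_y(x)$ for extremal $x,y$; (ii) prove that the extremal elements span $L$; (iii) build $f$ on a basis of extremal elements and read off uniqueness; (iv) derive associativity from invariance of $f$ under $\operatorname{Aut}(L)$. For (i): if $[x,y]=0$ the defining relation $[x,[x,y]]=f_x(y)x$ forces $f_x(y)=0$, and likewise $f_y(x)=0$; if $[x,y]\neq 0$, the Jacobi-identity computation already carried out in Example~\ref{ex_twogen} gives $f_y(x)[x,y]=f_x(y)[x,y]$, hence $f_x(y)=f_y(x)$. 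For (ii): since $\chr(K)\neq 2$ and $(\operatorname{ad} y)^3=0$ for every extremal $y$, the map $\exp(\operatorname{ad} y)=\mathrm{id}+\operatorname{ad} y+\tfrac12(\operatorname{ad} y)^2$ is an automorphism of $L$; applying it to an extremal element $x$ shows that $x+[y,x]+\tfrac12 f_y(x)\,y$ is extremal, so $[y,x]$ lies in the span of the extremal elements $x$, $y$, $\exp(\operatorname{ad} y)(x)$. An induction on the length of Lie monomials in a set of extremal generators then shows that the extremal elements span $L$.

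Granting (i) and (ii), pick a $K$-basis $\{e_i\}$ of $L$ consisting of extremal elements and let $f$ be the bilinear form determined by $f(e_i,e_j):=f_{e_i}(e_j)$. It is symmetric by (i), and for an arbitrary extremal $x$, expanding $x$ in the basis and invoking (i) once more yields $f(x,\cdot)=f_x$, as required. Uniqueness is then immediate: two bilinear forms with this property agree on $(\text{extremal})\times L$, hence on all of $L\times L$ by (ii). For associativity, observe that for any $g\in\operatorname{Aut}(L)$ the form $(a,b)\mapsto f(ga,gb)$ again satisfies the defining property — automorphisms carry extremal elements to extremal elements, and one checks $f_{gx}(gy)=f_x(y)$ — so by uniqueness $f$ is $\operatorname{Aut}(L)$-invariant. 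Specializing to $g=\exp(t\,\operatorname{ad} x)$ with $x$ extremal, the identity $f(\exp(t\operatorname{ad} x)a,\exp(t\operatorname{ad} x)b)=f(a,b)$ is polynomial in $t$, and its degree-one coefficient gives $f([x,a],b)+f(a,[x,b])=0$ for all extremal $x$ and all $a,b\in L$; since extremal elements span $L$ this holds for every $x\in L$, and rearranging yields $f([a,b],c)=f(a,[b,c])$.

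I expect the main obstacle to be the associativity. The tempting direct route — combining the Premet identities \eqref{eqn_premet1} and \eqref{eqn_premet2} to massage $f_x([y,z])$ into a cyclically symmetric expression — appears to lead only to tautologies, precisely because those identities are themselves consequences of the relations being manipulated; the invariance argument above, bootstrapped from uniqueness, is what supplies genuinely new information. Two smaller points need care: $\chr(K)\neq 2$ is used both in forming $\exp(\operatorname{ad} y)$ and in extracting the linear term in $t$, and the ``polynomial in $t$'' argument over a very small finite field should be handled by first extending scalars to an infinite field, which does not affect the conclusion.
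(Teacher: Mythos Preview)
The paper does not give its own proof of this statement: it is quoted verbatim as \cite[Theorem~2.5]{CSUW01} and used as a black box. So there is nothing in the present paper to compare your argument against.

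That said, your proof is sound and follows essentially the same architecture as the original proof in \cite{CSUW01}: symmetry on pairs of extremal elements via the Jacobi computation, the observation that extremal elements span $L$ (there obtained as in your step~(ii), using that $\exp(\operatorname{ad} y)$ is an automorphism when $y$ is extremal and $\chr(K)\neq 2$), the bilinear extension from an extremal basis, and associativity from $\operatorname{Aut}(L)$-invariance combined with uniqueness. One small point worth making explicit in your write-up of~(iii): the reason ``invoking~(i) once more'' works is that for an arbitrary extremal $x=\sum_i c_i e_i$ and a basis vector $e_j$ one has
\[
f(x,e_j)=\sum_i c_i f_{e_i}(e_j)=\sum_i c_i f_{e_j}(e_i)=f_{e_j}(x)=f_x(e_j),
\]
where the second equality is~(i), the third is linearity of the \emph{functional} $f_{e_j}$, and the last is~(i) again. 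This is the step where one might worry about ``linearity of $x\mapsto f_x$'', and it is worth spelling out that the detour through $f_{e_j}$ is what makes it go through. Your handling of the small-field issue by base change in~(iv) is the standard and correct fix.
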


This theorem implies a number of identities in the values of $f$, as presented in the following lemma.
We use these relations extensively in our computer calculations.
\begin{lemma}\label{lem_ttr_f}
Let $x,y,z,y_1, \ldots, y_l \in \Pi$, let $(f_x)_{x \in \Pi} \in (\mcF^*)^\Pi$, and let
$m \in \mcL(f)$. Then the following equalities hold:
\begin{enumerate}
	\item\label{itm_lem_ttr_fx_cx}\label{itm_lem_ttr_f_first} $f_x(y) = 0$ whenever $x = y$ or $x \not\sim y$,
	\item\label{itm_lem_ttr_fx_y} $f_x(y) = f_y(x)$,
	\item\label{itm_lem_ttr_fx_cxm} $f_x([y,m]) = 0$ whenever $x = y$ or $x \not\sim y$,
	\item\label{itm_lem_ttr_fx_ym} $f_x([y,m]) = -f_y(x,m)$,
	\item\label{itm_lem_ttr_fx_yzx} $f_x([y,[z,x]]) = f_x(y)f_x(z)$,
	\item\label{itm_lem_ttr_fx_yxm} $f_x([y,[x,m]]) = -f_x(y)f_x(m)$,
	\item\label{itm_lem_ttr_fx_yzm} $f_x([y,[z,m]]) = f_z([y,[x,m]]) - f_z([x,[y,m]])$,
	\item\label{itm_lem_ttr_f_rev}\label{itm_lem_ttr_f_last} $f_x(\lbXXdXX{y_1}{y_2}{y_{l-1}}{y_l}) = 
	(-1)^{l-1} f_{y_l}(\lbXXdXXX{y_{l-1}}{y_{l-2}}{y_2}{y_1}{x})$.
\end{enumerate}
\end{lemma}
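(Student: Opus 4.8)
Throughout write $L=\mcL(f)$ and let $f\colon L\times L\to K$ be the symmetric, associative bilinear form supplied by Theorem~\ref{thm_csuw_bil_f}; by \emph{associativity} I will always mean the relation $f(a,[b,c])=f([a,b],c)$, and antisymmetry and the Jacobi identity of the bracket will be used without comment. The plan is to derive all eight identities from Theorem~\ref{thm_csuw_bil_f} together with the Lie algebra axioms and the defining relations of $\mcF$; the Premet identities are not needed. The preliminary point is that, for a generator $x\in\Pi$, the linear form $y\mapsto f(x,y)$ coincides with the parameter $f_x$ viewed as a function on $L$: indeed $[x,[x,y]]=f_x(y)\,x$ holds in $L$ by construction, so the extremality form of $x$ in $L$ is $f_x$, and Theorem~\ref{thm_csuw_bil_f} identifies it with $f(x,\cdot)$ (this uses that the generators are nonzero in $L$). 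I would then treat the identities in the order (i)--(iv), then~(vi), then~(v), then~(vii), then~(viii).

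Identities~(i)--(iv) are immediate. For~(i): if $x=y$ then $[x,[x,x]]=0$, so $f_x(x)\,x=0$ and hence $f_x(x)=0$; if $x\not\sim y$ then $[x,y]=0$ in $\mcF$, so again $f_x(y)\,x=[x,[x,y]]=0$. Identity~(ii) is the symmetry of $f$. For~(iii) and~(iv), associativity gives $f_x([y,m])=f(x,[y,m])=f([x,y],m)$, which vanishes whenever $x=y$ or $x\not\sim y$ (this is~(iii)) and in general equals $-f([y,x],m)=-f_y([x,m])$ (this is~(iv)).

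For~(vi) I would combine associativity with the extremality relation $[x,[x,y]]=f_x(y)\,x$: associativity turns $f_x([y,[x,m]])$ into $f([x,y],[x,m])=f([[x,y],x],m)=-f([x,[x,y]],m)=-f_x(y)f_x(m)$. Identity~(v) is then immediate, since $[y,[z,x]]=-[y,[x,z]]$ gives $f_x([y,[z,x]])=-f_x([y,[x,z]])=f_x(y)f_x(z)$ by~(vi). For~(vii), associativity rewrites the left-hand side as $f([[x,y],z],m)$ and the right-hand side as $f([[z,y],x]-[[z,x],y],m)$, and these coincide because $[[x,y],z]=[[z,y],x]-[[z,x],y]$ follows from the Jacobi identity and antisymmetry.

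Identity~(viii) is the one genuinely tedious step, being a matter of sign-and-shape bookkeeping. Peeling off the outermost bracket of the argument and applying associativity $l-1$ times moves $x$ past $y_1,\dots,y_{l-1}$ one at a time, producing $f_x(\lbXXdXX{y_1}{y_2}{y_{l-1}}{y_l})=f(u,y_l)=f(y_l,u)$, where $u$ is the left-normed bracket with entries $x,y_1,\dots,y_{l-1}$ in that order. It then remains to rewrite $u$ in right-normed form, for which I would prove by induction on the number of entries (using only antisymmetry) that a left-normed bracket in $a_1,\dots,a_k$ equals $(-1)^{k-1}$ times the right-normed bracket in the reversed list $a_k,\dots,a_1$; applied with $k=l$ this yields exactly the sign $(-1)^{l-1}$ and the monomial $\lbXXdXXX{y_{l-1}}{y_{l-2}}{y_2}{y_1}{x}$. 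Apart from this bookkeeping and the consistency observation in the first paragraph that the form of Theorem~\ref{thm_csuw_bil_f} restricts to the prescribed $f_x$ on the generators, every step is a one-line manipulation; accordingly I expect no real obstacle beyond keeping the brackets and signs straight in~(viii).
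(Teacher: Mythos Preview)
Your proof is correct and follows essentially the same route as the paper's: both derive every item from the symmetry and associativity of the bilinear form of Theorem~\ref{thm_csuw_bil_f} together with the Lie algebra axioms, without invoking the Premet identities. The only differences are cosmetic---you prove~(vi) first and deduce~(v) from it (the paper does the reverse), and you spell out the left-to-right-normed reversal in~(viii) more explicitly than the paper's one-line ``apply associativity and anti-symmetry $l-1$ times''; your preliminary remark that the form restricts to the prescribed $f_x$ on generators is a point the paper leaves implicit.
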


\begin{proof}
\ref{itm_lem_ttr_fx_cx} follows from the observation that $[x,[x,y]] = 0$ whenever $x = y$ or $x \not\sim y$.
\ref{itm_lem_ttr_fx_y} follows immediately from the symmetry of $f$ in Theorem \ref{thm_csuw_bil_f}: $f_x(y) = f(x,y) = f(y,x) = f_y(x)$.
\ref{itm_lem_ttr_fx_cxm} and \ref{itm_lem_ttr_fx_ym} follow similarly from its associativity: $f_x([y,m]) = f(x, [y,m]) = f([x,y], m)$, which is equal to $0$ for case \ref{itm_lem_ttr_fx_cxm} and reduces to $-f([y,x], m) = -f(y, [x,m]) = -f_y([x,m])$ in the case of \ref{itm_lem_ttr_fx_ym}.
For \ref{itm_lem_ttr_fx_yzx}, observe $f_x([y,[z,x]]) = f(x,[y,[z,x]]) = -f(y,[x,[z,x]]) = f(y,f_x(z)x) = f_x(z)f(y,x) = f_x(z)f_x(y)$ by associativity and bilinearity of $f$; \ref{itm_lem_ttr_fx_yxm} follows similarly.
\ref{itm_lem_ttr_fx_yzm} follows from the Jacobi identity and associativity: $f_x([y,[z,m]]) = f(x,[y,[z,m]]) = -f([x,y],[m,z]) = -f([[x,y], m], z) = 
-f_z([[x,y],m]) = -f_z([x,[y,m]]) + f_z([y,[x,m]])$.
Finally, to see \ref{itm_lem_ttr_f_rev} simply apply associativity of $f$ and anti-symmetry of Lie algebras $l-1$ times.
\end{proof}

\section{$f$-sets}\label{sec_fsets}
Our goal in the research described here was to find more information on the structure of the variety $X$ describing the parameter space, 
and of $\mcL(f)$ for various $f \in X$. 
In order to achieve this goal, we have to compute a multiplication table for $\mcL(f)$. There are two ways to approach this problem:
either pick an $f \in X$ in advance and compute a multiplication table for $\mcL(f)$, or compute a general multiplication table, i.e.,~one that has entries in the coordinate ring of $X$. The latter approach, which we have chosen to pursue,  has two important advantages: Firstly, we only need to compute the multiplication table once, and can afterwards easily instantiate it for particular $f \in X$. Secondly, by insisting that $\mcL(f)$ be a Lie algebra and of maximal dimension we obtain information about the structure of $X$. 

\begin{examplecont}[\ref{ex_twogen}]
In this example we would automatically recover the relation $f_x(y) = f_y(x)$ via the Jacobi identity on $y$, $x$, and $[x,y]$:
\[
0 = [y,[x,[x,y]]] + [x,[[x,y],y]] + [[x,y],[y,x]] = f_x(y)[y,x] + f_y(x)[x,y] + 0,
\]
and $f_x([x,y]) = 0$ via evaluating $[x,[x,[x,y]]]$ from the inside out:
\[
f_x([x,y])x = [x,[x,[x,y]]] = [x, f_x(y) x] = f_x(y) [x,x] = 0,
\]
so that $f_x([x,y]) = 0$ since the fact that $\mcL(f)$ is of maximal dimension implies that $x \neq 0$.
\end{examplecont}

To aid in the description of our algorithms we introduce the concept of \emph{$f$-sets}.
For the remainder of this section fix a graph $\Gamma$ and a field $K$, 
and let $B$ be a set of monomials of $\mcF(K, \Gamma)$ that project to a monomial $K$-basis of $\mcL(0)$
(which exists by \cite[Lemma 3.1]{iPR09}).
Furthermore, let $\bfF$ be some subset of $\Pi \times B$, and let $R_\bfF$ be the rank $|\bfF|$ multivariate polynomial ring $K[\bff_y(c) \mid (y,c) \in \bfF]$ with variables $\bff_y(c)$.
Furthermore, let $r : \Pi \times B \rightarrow R_\bfF$, $(x,b) \mapsto r_x(b)$ be a map from $\Pi \times B$ into this polynomial ring.
We call such a pair $(\bfF, r)$ an \emph{$f$-set of size $|\bfF|$}.

Moreover, if $r$ is such that for all $f \in X$ we have, under the evaluation $\bff_y(c) \mapsto f_y(c)$ for $(y,c) \in \bfF$,
\[
r_{x}(b) = f_{x}(b) \;\;\; \mbox{ for all } x \in \Pi, b \in B,
\]
then we call $(\bfF, r)$ a \emph{sufficient $f$-set}.

Finally, suppose $(\bfF, r)$ is an $f$-set where we require, to exclude trivialities, that $r_y(c) = \bff_y(c)$ for all $(y,c) \in \bfF$.
We construct, for an arbitrary $v \in K^{\bfF}$, an element $(f_{x})_{x \in \Pi}$ of $(\mcF^*)^\Pi$ as follows. Take, for every $b \in B$, the value $f_{x}(b) \in K$ to be $r_{x}(b) \in R_\bfF$ evaluated in $v$, that is, $\bff_y(c) \mapsto v_{(y,c)}$. This defines a map $\phi: K^{\bfF} \rightarrow (\mcF^*)^\Pi$. Note that $\mcL(\phi(v))$ is always a Lie algebra, but not always one of maximal dimension. If, however, $\mcL(\phi(v))$ is a Lie algebra of dimension $\dim(\mcL(0))$ for all $v \in K^{\bfF}$, then we call $(\bfF, r)$ a \emph{free $f$-set}.

\bigskip

For example, $(\Pi \times B, (y,c) \mapsto \bff_{y}(c))$ is trivially a sufficient $f$-set. It is, however, in general not a free $f$-set. On the other hand, $(\emptyset, (y,c) \mapsto 0)$ forms a free $f$-set, but in general not a sufficient one.

\begin{examplecont}[\ref{ex_twogen}]
Recall $\Pi = \{ x, y\}$ and $B = \{ x, y, [x,y] \}$, so that $\Pi \times B = \{ (x,x), (x,y), (x,[x,y]), (y,x), (y,y), (y,[x,y]) \}$. 
Then $(\Pi \times B, (y,c) \mapsto \bff_y(c))$ is a (trivial) sufficient $f$-set of size $6$.
In order for $\mcL(f)$ to be a Lie algebra of maximal dimension $3$ we must have $f_x(x) = f_x([x,y]) = f_y(y) = f_y([x,y]) = 0$, and we have seen before that $f_y(x)$ must be equal to $f_x(y)$.
This means that $(\bfF, r)$ is not a free $f$-set, since $\phi(v)$ is not a Lie algebra if $v_{(x,y)} \neq v_{(y, x)}$.

However, if we take $\bfF = \{ (x,y) \}$, $r_x(y) = r_y(x) = \bff_x(y)$, and $r_x(x) = r_x([x,y]) = r_y(y) = r_y(x,y) = 0$,
then $(\bfF, r)$ is a sufficient $f$-set of size $1$, 
and by checking the Jacobi identity for $\mcL(f)$ we find that it is a free sufficient $f$-set.
\end{examplecont}

\begin{lemma}\label{lem_free_fset_implies_affine_space}
If $\bfF$ is a free sufficient $f$-set then $X \cong K^\bfF$.
\end{lemma}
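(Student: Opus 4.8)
The plan is to show that the map $\phi: K^{\bfF} \to (\mcF^*)^\Pi$ constructed just before the lemma statement restricts to a bijection $K^{\bfF} \to X$, and then to check that both $\phi$ and its inverse are morphisms of $K$-varieties (indeed, that under the embedding of Theorem \ref{thm_Kvariety_struct} the map $\phi$ is simply given by polynomials, with a linear-projection inverse), so that the isomorphism $X \cong K^{\bfF}$ holds in the category of $K$-varieties.

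First I would check that $\phi$ maps $K^{\bfF}$ \emph{into} $X$: this is precisely the hypothesis that $(\bfF, r)$ is a \emph{free} $f$-set, which says $\dim(\mcL(\phi(v))) = \dim(\mcL(0))$ for every $v \in K^{\bfF}$, i.e.\ $\phi(v) \in X$. Second, I would check that $\phi$ is \emph{injective}: since we require $r_y(c) = \bff_y(c)$ for all $(y,c) \in \bfF$, the value of $\phi(v)$ already determines the coordinate $v_{(y,c)} = f_y(c)$ for each $(y,c) \in \bfF$, so $v$ is recovered from $\phi(v)$. Third, and this is the step that genuinely uses sufficiency, I would check that $\phi$ is \emph{surjective onto $X$}: given $f \in X$, set $v_{(y,c)} := f_y(c)$ for $(y,c) \in \bfF$; because $(\bfF, r)$ is a \emph{sufficient} $f$-set, $r_x(b)$ evaluated at this $v$ equals $f_x(b)$ for every $x \in \Pi$ and every $b \in B$, so $\phi(v)$ and $f$ agree on all of $B$. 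Since $B$ projects to a basis of $\mcL(0)$ and $f, \phi(v) \in X$ both have $\mcL$ of dimension $\dim(\mcL(0))$, agreement on $B$ forces $\phi(v) = f$ as elements of $X$ — here one invokes Theorem \ref{thm_Kvariety_struct}, which tells us that a point of $X$ is determined by its restriction to any homogeneous subspace $V$ with $V + \mcI(0) = \mcF$, and the span of $B$ is such a $V$.

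Finally, to upgrade the set-theoretic bijection to an isomorphism of varieties, I would work inside the embedding $X \hookrightarrow (V^*)^\Pi$ of Theorem \ref{thm_Kvariety_struct} with $V = \operatorname{span}(B)$. In these coordinates $\phi$ is the map $v \mapsto (r_x(b))_{x \in \Pi, b \in B}$, whose components are polynomials in the $v_{(y,c)}$ by construction of $R_\bfF$; hence $\phi: K^{\bfF} \to X$ is a morphism. Its inverse is the projection $(f_x|_V)_{x\in\Pi} \mapsto (f_y(c))_{(y,c)\in\bfF}$, reading off the designated coordinates, which is linear and therefore also a morphism. Composing the two both ways gives the identity by the bijectivity just established, so $X \cong K^{\bfF}$ as $K$-varieties, and $K^{\bfF}$ is a finite-dimensional affine space of dimension $|\bfF|$.

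The main obstacle is the surjectivity argument: one must be careful that ``$\phi(v)$ and $f$ agree on $B$'' really does imply ``$\phi(v) = f$ in $X$''. This is not automatic for arbitrary elements of $(\mcF^*)^\Pi$ — two functionals agreeing on $B$ can differ on $\mcF$ — but it is forced once we know both points lie in $X$, via Theorem \ref{thm_Kvariety_struct}: the restriction-to-$V$ map is injective on $X$, so two points of $X$ that restrict identically to $\operatorname{span}(B)$ coincide. Everything else is bookkeeping with the definitions of sufficient and free $f$-sets.
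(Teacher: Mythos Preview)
Your proof is correct and follows the same approach as the paper's: freeness gives $\phi(K^{\bfF}) \subseteq X$, and sufficiency gives surjectivity of $\phi$ onto $X$, hence a bijection. The paper's own proof is considerably terser---three sentences---and omits both the explicit injectivity argument and the variety-morphism upgrade that you supply; in that respect your write-up is more complete than the original, but the underlying idea is identical.
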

\begin{proof}
Recall from above the map $\phi : K^\bfF \rightarrow (\mcF^*)^\Pi$. By the assumption that $\bfF$ is free, we have $\phi(K^\bfF) \subseteq X$. The assumption that $\bfF$ is sufficient guarantees that $X$ is not a proper subset of $\phi(K^\bfF)$, so that $\phi$ is a bijection between $K^\bfF$ and $X$.
\end{proof}

We remark that a priori it is possible for $X$ to be an affine space while no free sufficient $f$-set exists. However, we have not yet encountered such a case (see also the discussion in Section \ref{sec_conclusion}).

\section{The algorithms used}\label{sec_algs}
In this section we describe the algorithms used to obtain the results in Section \ref{sec_comp_results}. 
For the computational results in this paper we have developed an algorithm that,
given a graph $\Gamma$ and a field $K$, computes the variety $X$ and the multiplication
table of $\mcL(f)$ for any $f \in X$. 
These algorithms were implemented in the {\Magma} computer algebra system \cite{Magma217}.
The algorithm may be broken up into four distinct parts, of which the third ({\sc MultiplicationTable}) and the fourth ({\sc MinimizeFSet}) are the most time-consuming.

\subsection{Computing a basis for $\mcL(0)$}\label{sec_alg_comp_basis}

\begin{algorithm}{ComputeBasis}{Computing a basis}{comp_basis}
{\bf in:} \>\>\> A field $K$ and a graph $\Gamma$, with $\Pi = V(\Gamma)$. \\
{\bf out:} \>\>\> A set of monomial elements of $\mcF$. \\
\textbf{begin} \lnreset \\
\lnp \> \textbf{let} $U$ be the free associative algebra with generators $\Pi$, \\
\lnp \> \textbf{let} $\mu: \mcF \rightarrow U$ be the linear map defined by $x \mapsto x$ for $x \in \Pi$ and \\
     \> \> $[b,c] \mapsto \mu(b)\mu(c) - \mu(c)\mu(b)$ for $b, c \in \mcL(0)$, \\
\lnp \> \textbf{let} $B = B' = \Pi \subseteq \mcF$ and $B_U = B_U' = \{ \mu(x) \mid x \in B \} \subseteq U$, \\
\lnp \> \textbf{let} $U \supseteq I_U = \langle x^2 \mid x \in \Pi \rangle \cup \langle \mu([x,y]) \mid x,y \in \Pi, x \not\sim y \rangle$,  \\
\lnp \> \textbf{while} $|B_U'| \neq 0$ \textbf{ do } \\
     \> \> \emph{//Update the ideal representing extremality of elements of $\Pi$} \\
\lnp \> \> \textbf{let} $I_U = I_U \cup \langle \mu([x,[x,b]]) \mid x \in \Pi, b \in B' \rangle$, \\
     \> \> \emph{//Find new elements that are not linear combinations of existing basis elements} \\
\lnp \> \> \textbf{let} $B'_0 = B' \subseteq \mcF$, $B' = \emptyset  \subseteq \mcF$, $B_U' = \emptyset  \subseteq U$, \\
\lnp \> \> \textbf{for each} $x \in \Pi, b \in B'_0$ \textbf{do} \\
\lnp \> \> \> \textbf{let} $p = \mu([x,b])$, \\
\lnp \> \> \> \textbf{if} $p \notin KB_U + I_U$ \textbf{then} \\
\lnp \> \> \> \> \textbf{let} $B = B \cup [x,b]$, $B' = B' \cup [x,b]$, \\
\lnp \> \> \> \> \textbf{let} $B_U = B_U \cup \{p\}$, $B_U' = B_U' \cup \{p\}$ \\
\lnp \> \> \> \textbf{end if}, \\
\lnp \> \> \textbf{end for}, \\
\lnp \> \textbf{end while}, \\
\lnp \> \textbf{return} $B$.\\
\textbf{end}
\end{algorithm}

In this section we describe how we compute, strictly speaking, a set of monomial elements of $\mcF(K, \Gamma)$ 
that projects to a basis of $\mcL(f)$ for any $f \in X$. However, because in particular such a basis projects to
a basis $\mcL(0)$ that consists of monomials, we will often call such a basis ``a monomial basis of $\mcL(0)$'' in the
remainder.

A sketch of the algorithm {\sc ComputeBasis} is given as Algorithm \ref{alg_comp_basis}.
We initialize $B$ with the set $\Pi$ of generators of $\mcF(K, \Gamma)$, so that $B$ contains a basis for 
all monomials of length $1$. We then iteratively extend a basis for monomials up to length
$l$ to a basis for monomials up to length $l+1$ by forming all products $[x, b]$, where $x \in \Pi$
and $b$ a monomial of length $l$, and testing whether this product can be written in previously
found basis elements. Once we have arrived at a length for which no new basis elements are found,
we are finished.

Note that while we return elements of $\mcF$, all nontrivial computations take place inside the
free associative algebra $U$ on the sandwich generators of a Lie algebra $L_0$ (which will be identical to $\mcL(0)$).
In order to connect the two algebras we introduce the map $\mu$ in line $2$, that maps elements of $\mcF$ to elements of $U$. 
In the course of the algorithm, we are also constructing an ideal $I_U$ of $U$ such that 
the adjoint representation $U \rightarrow \operatorname{End}(L_0)$ factors through $U \rightarrow U/I_U$. 
Here $I_U$ describes the fact that the generators are sandwich elements; $I_U$ should be viewed as the 
analog in $U$ of the ideal $\mcI(0)$ of $\mcF$ (cf. Section \ref{sec_prelim_mainres}).
This construction is what allows us to include $x^2$ in $I_U$ in line $4$ of the algorithm, 
since $\operatorname{ad}_x^2 = 0$ if $x$ is a sandwich.

Note finally that in order to be 
able to compute in $U/I_U$ (as required in line $9$ of Algorithm \ref{alg_comp_basis}) 
we have to repeatedly compute a Gr\"obner basis of $I_U$ but, fortunately, 
since the elements of $I_U$ are all homogeneous a truncated Gr\"obner basis is sufficient
(and sufficiently efficient in the cases we are interested in).

\begin{lemma}\label{lem_alg_compute_basis}
The algorithm {\sc ComputeBasis} returns a set $B$ of monomials in $\mcF$, and $B$ projects to a basis of $\mcL(f)$ for any $f \in X$. 
Moreover, all elements of $B$ are either of the form $x$ ($x \in \Pi$) or of the form $[x,b]$ ($x \in \Pi$, $b \in B$).
\end{lemma}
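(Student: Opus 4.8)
The plan is to verify the two assertions of the lemma directly by tracing the execution of Algorithm \ref{alg_comp_basis}. The structural claim — that every element of the returned set $B$ is either a generator $x \in \Pi$ or of the form $[x,b]$ with $x \in \Pi$ and $b$ a previously-found element of $B$ — is immediate by inspection: $B$ is initialized to $\Pi$ in line 3, and the only other place $B$ is enlarged is line 11, where the element adjoined is $[x,b]$ with $x \in \Pi$ and $b \in B'_0 \subseteq B$. So the real content is that $B$ projects to a basis of $\mcL(f)$ for every $f \in X$. I would prove this in two halves: linear independence, and spanning.

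For linear independence, I would argue that the monomials in $B$ remain linearly independent in $\mcL(0)$, hence (having maximal cardinality among monomial sets that do) in $\mcL(f)$ for every $f \in X$, since $\dim \mcL(f) = \dim \mcL(0)$. The key is the fidelity of the translation to the associative algebra $U$: the map $\mu$ intertwines the $\mcF$-side with $U$, and $I_U$ is constructed (via line 4 for $x^2$, $x \in \Pi$, and line 6 for the iterated adjoints $\mu([x,[x,b]])$) so that the adjoint representation of $L_0 = \mcL(0)$ factors through $U/I_U$; thus $p = \mu([x,b]) \notin KB_U + I_U$ in line 9 is genuinely equivalent to $[x,b]$ being linearly independent of $B$ in $\mcL(0)$. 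By induction on the length of the monomials, the set $B$ at every stage projects to a linearly independent subset of $\mcL(0)$.

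For spanning, the loop-termination condition ($B'_U = \emptyset$) is what does the work. When the {\bf while} loop exits, every product $[x,b]$ with $x \in \Pi$ and $b$ of maximal length in $B$ already lies in $KB_U + I_U$, i.e.\ reduces modulo $\mcI(0)$ to a $K$-combination of elements of $B$ of length at most that of $[x,b]$. Since $\mcL(0)$ is spanned by monomials and is generated by $\Pi$, a straightforward induction on monomial length then shows that every monomial of $\mcF$ projects into the $K$-span of $B$ in $\mcL(0)$: a length-$(l{+}1)$ monomial $[x,m]$ with $m$ of length $l$ is, by the inductive hypothesis, congruent to $[x, \sum \lambda_i b_i]$ with $b_i \in B$ of length $\le l$, and each $[x,b_i]$ was tested — if it was not added to $B$ it was already in the span, and if it was added it is in $B$. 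Hence $B$ spans $\mcL(0)$, and combined with linear independence it is a basis; finiteness (termination) follows because $\dim\mcL(0) < \infty$ by \cite{ZK90}, so no new basis elements can be found after finitely many rounds. Finally, since $B$ has exactly $\dim \mcL(0) = \dim \mcL(f)$ elements and is linearly independent in $\mcL(f)$, it is also a basis of $\mcL(f)$ for every $f \in X$.

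The main obstacle is the precise justification that the purely associative computation in $U/I_U$ faithfully detects linear (in)dependence in the Lie algebra $\mcL(0)$ — i.e.\ that $I_U$ captures exactly the relations imposed by $\mcI(0)$ on the adjoint action, no more and no less. Getting "no more" requires knowing that no spurious relations are introduced (the homogeneity remark and the use of truncated Gröbner bases are relevant here, but one must check nothing of the wrong degree is killed), and "no less" requires that all the sandwich relations $\operatorname{ad}_x^2 = 0$ and their consequences $\mu([x,[x,b]])$ are genuinely present in $I_U$ by the time they are needed in line 9. Everything else is bookkeeping on the loop invariant "$B$ projects to a linearly independent set in $\mcL(0)$ containing a basis for all monomials of length $\le l$".
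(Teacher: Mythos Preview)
Your proposal is considerably more detailed than the paper's own proof, which is essentially two sentences. The paper argues exactly as you do for the structural claim (inspection of line~11), and for the passage from $\mcL(0)$ to $\mcL(f)$ it simply cites \cite[Lemma~3.1]{iPR09}: a homogeneous basis of $\mcL(0)$ is a homogeneous basis of $\mcL(f)$ for any $f\in X$. The paper does \emph{not} justify that the algorithm actually produces a basis of $\mcL(0)$; it treats the correctness of the computation in $U/I_U$ as understood.

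What you add is a genuine attempt to prove that $B$ is a basis of $\mcL(0)$, via separate linear-independence and spanning arguments, and you correctly flag the only real subtlety: that (in)dependence modulo $I_U$ in the associative algebra faithfully reflects (in)dependence in $\mcL(0)$. That is indeed the nontrivial content, and the paper sidesteps it entirely. One small wrinkle in your transfer step: you assert linear independence of $B$ in $\mcL(f)$ ``since $\dim\mcL(f)=\dim\mcL(0)$'', but the clean direction is the other one --- a homogeneous monomial set that \emph{spans} $\mcL(0)$ also spans $\mcL(f)$ (each relation in $\mcI(f)$ differs from one in $\mcI(0)$ only by terms of strictly lower degree), and then equality of dimensions forces it to be a basis of $\mcL(f)$. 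That is exactly what the cited lemma from \cite{iPR09} provides, and it repairs the slightly circular phrasing in your second paragraph.
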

\begin{proof}
The fact that $B$ consists of monomials, and that these monomials are of the stated form, 
is immediate from the algorithm (note the construction of new elements of $B$ in line $11$).
Moreover, because our $f \in X$ are such that $\dim(\mcL(f)) = \dim(\mcL(0))$, 
a homogeneous basis for $\mcL(0)$ is a homogeneous basis for $\mcL(f)$ (cf.~\cite[Lemma 3.1]{iPR09}.).
\end{proof}

\subsection{The initial sufficient $f$-set}\label{sec_alg_init_fset}

\begin{algorithm}{InitialFSet}{Finding an initial $f$-set}{init_fset}
{\bf in:} \>\>\> A field $K$ and a graph $\Gamma$, with $\Pi = V(\Gamma)$, and a monomial basis $B$ of $\mcL(0)$. \\
{\bf out:} \>\>\> A sufficient $f$-set $(\bfF, r)$. \\
\textbf{begin} \lnreset \\
\lnp \> \textbf{let} $\bfF = \emptyset$, \\
\lnp \> \textbf{for} $c \in B$, $y \in \Pi$ \textbf{do} \\
\lnp \> \> \textbf{if} $f_y(c)$ can be expressed in $\{ f_{x}(b) \mid (x, b) < (y, c)\}$ using 
	Lemma \ref{lem_ttr_f}\ref{itm_lem_ttr_f_first} -- \ref{itm_lem_ttr_f_last} \textbf{then} \\
\lnp \> \> \> \textbf{set} $r_y(c) \in R_\bfF$ to be the corresponding expression, where $f_x(b)$ is replaced by $r_x(b)$\\
\lnp \> \> \textbf{else} \\
\lnp \> \> \> \textbf{set} $\bfF = \bfF \cup \{(y,c)\}$, \\
\lnp \> \> \> \textbf{set} $r_y(c) = \bff_y(c)$ \\
\lnp \> \> \textbf{end if} \\
\lnp \> \textbf{end for} \\
\lnp \> \textbf{return} $(\bfF, r)$.\\
\textbf{end}
\end{algorithm}

The computation of the multiplication table, described in Section \ref{sec_alg_comp_mt}, will take place over the ring $R_\bfF$ for some sufficient $f$-set $(\bfF, r)$. In this section we describe {\sc InitialFSet}, the procedure for initialisation of $R_\bfF$, as presented in Algorithm \ref{alg_init_fset}. Note that the $f$-set returned by this algorithm is a sufficient $f$-set, but in general not a free $f$-set.

We fix in advance an (arbitrary) total ordering on the elements of $\Pi$, and a total ordering on $B$ respecting the natural order by monomial length, i.e.,~$b < c$ whenever $l(b) < l(c)$. These extend to a total ordering of $\Pi \times B$ by $(x, b) < (y, c)$ whenever $b < c$, or $b = c$ and $x < y$. The main for loop in Algorithm \ref{alg_init_fset} traverses $\Pi \times B$ in ascending order; this is the ordering meant by ``$<$'' in line $3$ as well.
{\sc InitialFSet} uses the relations in Lemma \ref{lem_ttr_f} as reduction steps: $A = B$ is interpreted as ``$A$ may be reduced to $B$''. 

\begin{lemma}\label{lem_alg_initial_fset}
The algorithm {\sc InitialFSet} returns a sufficient $f$-set.
\end{lemma}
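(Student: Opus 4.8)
The plan is to verify the two requirements of the definition separately, the nontrivial one by induction along the fixed total order on $\Pi\times B$ that the main loop traverses. That the returned pair $(\bfF,r)$ is an $f$-set at all is immediate from the algorithm: $\bfF\subseteq\Pi\times B$ by lines~6--7, the map $r$ gets a value on every pair of $\Pi\times B$ (for each $(y,c)$ exactly one of line~4, line~7 fires, and it fires only once since the loop never revisits a pair), and the side condition $r_y(c)=\bff_y(c)$ for $(y,c)\in\bfF$ that excludes trivialities holds by line~7. Termination is clear, $\Pi\times B$ being finite.

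For sufficiency I would prove, by induction on the pair $(y,c)$ in the loop order, that the value $r_y(c)$ assigned when $(y,c)$ is processed is a polynomial in the variables $\bff_{x'}(c')$ with $(x',c')\in\bfF$ and $(x',c')\le(y,c)$, and that $r_y(c)\big|_{\bff_\bullet(\bullet)\mapsto f_\bullet(\bullet)}=f_y(c)$ for every $f\in X$. In the inductive step: if the test in line~3 fails, then $(y,c)$ is adjoined to $\bfF$ and $r_y(c)=\bff_y(c)$, which has the required support and evaluates to $f_y(c)$ tautologically. If the test succeeds, then Lemma~\ref{lem_ttr_f} supplies an expression of $f_y(c)$ as a polynomial combination of values $f_{x_1}(b_1),\dots,f_{x_k}(b_k)$ with all $(x_i,b_i)<(y,c)$, and line~4 records $r_y(c)$ as the same combination of $r_{x_1}(b_1),\dots,r_{x_k}(b_k)$. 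Each $r_{x_i}(b_i)$ has, by the induction hypothesis, support in variables indexed by pairs $\le(x_i,b_i)<(y,c)$, hence so does $r_y(c)$; and evaluating this combination and applying the induction hypothesis $r_{x_i}(b_i)\big|_{\bff\mapsto f}=f_{x_i}(b_i)$ term by term yields the same polynomial combination of the $f_{x_i}(b_i)$, which is $f_y(c)$. Applying this to all pairs gives exactly the sufficiency condition $r_x(b)\big|_{\bff\mapsto f}=f_x(b)$ for all $(x,b)\in\Pi\times B$ and all $f\in X$.

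The only spot requiring genuine care is the appeal to Lemma~\ref{lem_ttr_f}: one must use that each reduction rule \ref{itm_lem_ttr_f_first}--\ref{itm_lem_ttr_f_last} is a true identity for every $f\in X$ — which it is, being derived from the associative symmetric form attached to $\mcL(f)$ by Theorem~\ref{thm_csuw_bil_f}, whose restriction to the generators coincides with the prescribed $(f_x)$ once $\dim\mcL(f)=\dim\mcL(0)$ — and that these rules express $f_y(c)$ only through values at pairs strictly smaller in the chosen order, which is precisely what the test in line~3 checks before the rule is applied. I do not expect any serious obstacle: the lemma makes no claim that $(\bfF,r)$ is free or of minimal size, so there is nothing to optimize, only to track through the loop; minimization is the separate job of {\sc MinimizeFSet}.
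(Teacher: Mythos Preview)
Your proof is correct and follows essentially the same approach as the paper: sufficiency comes down to the fact that the identities of Lemma~\ref{lem_ttr_f} hold for every $f\in X$, and that the resulting reductions are polynomial so that each $r_y(c)$ genuinely lies in $R_\bfF$. The paper states this in two sentences without spelling out the induction along the loop order; your explicit induction is exactly the unpacking of the paper's phrase ``this immediately implies sufficiency.''
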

\begin{proof}
Observe that the relations in Lemma \ref{lem_ttr_f} all must hold if $\mcL(f)$ is to be a Lie algebra of dimension $\mcL(0)$; this immediately implies sufficiency. Finally, observe that all reductions thus obtained are polynomial, so that indeed $r_y(c)$ defined in line $4$ of Algorithm \ref{alg_init_fset} is an element of $R_\bfF$.
\end{proof}

\subsection{Computing the multiplication table}\label{sec_alg_comp_mt}

\begin{algorithm}{MonomialToBasis}{Expressing a monomial as a linear combination of basis elements}{monomial_to_basis}
{\bf in:} \>\>\> A field $K$ and a graph $\Gamma$, with $\Pi = V(\Gamma)$, a monomial basis $B$ of $\mcL(0)$, \\
\>\>\> a sufficient $f$-set $(\bfF, r)$,  a partially defined multiplication table $(\axbc)$, \\
\>\>\> and a monomial $m = \lbXdX{x_1}{x_l}$. \\
{\bf out:} \>\>\> $v \in (R_\bfF)^B$ such that $m = \sum_{b \in B} v_b b$, or \textbf{fail}. \\
\textbf{begin} \lnreset \\
\lnp \> \textbf{if} $m$ can be written as a linear combination $v \in (R_\bff)^B$ of basis elements, \\
     \> by iteratively computing $\lbXdX{x_i}{x_l}$ for $i=l-1,\ldots,1$ using $(\axbc)$ \textbf{then} \\
\lnp \> \> \textbf{return} $v$. \\
\lnp \> \textbf{else if} $x_{l-1} \not\sim x_l$ in $\Gamma$ \textbf{then} \\
\lnp \> \> \textbf{return} $0 \in (R_\bfF)^B$. \\
\lnp \> \textbf{else if} $x_i = x_{i+1}$ for some $i < l$ \textbf{then} \\
     \> \> \emph{/* Use $[x,[x,y]] = f_x(y) x$ */} \\
\lnp \> \> \textbf{let} $x = x_i$, $y = \lbXdX{x_{i+2}}{x_l}$, \\
\lnp \> \> \textbf{let} $t = r_x(y) \in R_\bfF$, \\
\lnp \> \> \textbf{let} $v$ such that $\sum v_b b = \lbXdX{x_1}{x_i}$ (using $(\axbc)$), \\
\lnp \> \> \textbf{return} $t \cdot v$. \\
\lnp \> \textbf{else if} $x_i = x_{i+2}$ for some $i<l-2$ \textbf{then} \\
     \> \> \emph{/* Use $2[x,[y,[x,z]]] = f_x([y,z]) x - f_x(z)[x,y] - f_x(y)[x,z]$ */} \\
\lnp \> \> \textbf{let} $x = x_i$, $y = x_{i+1}$, $z = \lbXdX{x_{i+3}}{x_l}$, \\
\lnp \> \> \textbf{let} $t_{xyz} = r_x([x_{i+1},z]) \in R_\bfF$, \\
\lnp \> \> \textbf{let} $t_{xy} = r_x(y) \in R_\bfF$, \\
\lnp \> \> \textbf{let} $t_{xz} = r_x(z) \in R_\bfF$, \\
\lnp \> \> \textbf{let} $v^{x}$  such that $\sum v^{x}_b b = \lbXdX{x_1}{x_i}$ (using $(\axbc)$), \\
\lnp \> \> \textbf{let} $v^{xy}$ such that $\sum v^{xy}_b b = \lbXXdXXX{x_1}{x_2}{x_{i-1}}{x_i}{y}$ (using $(\axbc)$), \\
\lnp \> \> \textbf{let} $v^{xz}$ such that $\sum v^{xz}_b b = \lbXXdXXX{x_1}{x_2}{x_{i-1}}{x_i}{z}$ (using $(\axbc)$), \\
\lnp \> \> \textbf{return} $\frac{1}{2}t_{xyz} \cdot v^{x} -\frac{1}{2} t_{xz} \cdot v^{xy} -\frac{1}{2} t_{xy} \cdot v^{xz}$. \\
\lnp \> \textbf{else} \\
\lnp \> \> \textbf{return fail} \\
\lnp \> \textbf{end if}. \\
\textbf{end}
\end{algorithm}

\begin{algorithm}{MonomialRelations}{Finding monomial relations}{monomial_relations}
{\bf in:} \>\>\> A field $K$ and a graph $\Gamma$, with $\Pi = V(\Gamma)$, a monomial basis $B$ of $\mcL(0)$, \\
\>\>\> a sufficient $f$-set $(\bfF, r)$, a partially defined multiplication table $(\axbc)$, \\
\>\>\> a set $\mcM$ of monomials of length $l$ currently under consideration, \\
\>\>\> an integer $k \geq 1$, and a monomial $m = \lbXdX{x_1}{x_l} \in \mcM$. \\
{\bf out:} \>\>\> A set $\mcA$ of elements $a \in (R_\bfF)^{B \times \mcM}$ such that $m = \sum_{b \in B} a_b b + \sum_{u \in \mcM} a_u u$. \\
\textbf{begin} \lnreset \\
\lnp \> \textbf{let} $\mcA = \emptyset$, \\
     \> \emph{/* By the Jacobi identity */} \\
\lnp \> \textbf{write} $\lbXdX{x_k}{x_l}$ as a sum $s$ of $2^{l-k+4}$ monomials using Equation (\ref{eqn_jac_from_start}), \\
\lnp \> \textbf{find} $a \in (R_\bfF)^{B \times \mcM}$ such that $\lbXXdXX{x_1}{x_2}{x_{k-1}}{s} = \sum_{b \in B} a_b b + \sum_{u \in \mcM} a_u u$,  \\
     \> using {\sc MonomialToBasis} to obtain the entries indexed by $B$, \\
\lnp \> \textbf{let} $\mcA = \mcA \cup {a}$, \\
     \> \emph{/* Using Lemma \ref{lem_eqn_SxNxM} */} \\
\lnp \> \textbf{for} $(i,j)$ such that $x_i = x_j$, $1 \leq i \leq k$, and $i+3 \leq j \leq l$ \textbf{do} \\
\lnp \> \> \textbf{write} $m$ as a sum $s$ of $i-j+1$ monomials using Lemma \ref{lem_eqn_SxNxM}, \\
\lnp \> \> \textbf{find} $a \in (R_\bfF)^{B \times \mcM}$ such that $s = \sum_{b \in B} a_b b + \sum_{u \in \mcM} a_u u$, using \\
     \> \> {\sc MonomialToBasis} for the entries indexed by $B$, \\
\lnp \> \> \textbf{let} $\mcA = \mcA \cup {a}$ \\
\lnp \> \textbf{end for}, \\
\lnp \> \textbf{return} $\mcA$. \\
\textbf{end}
\end{algorithm}

\begin{algorithm}{ComputeMultiplicationTable}{Computing a multiplication table}{compute_mt}
{\bf in:} \>\>\> A field $K$ and a graph $\Gamma$, with $\Pi = V(\Gamma)$, a monomial basis $B$ of $\mcL(0)$, \\
\>\>\> and a sufficient $f$-set $(\bfF, r)$. \\
{\bf out:} \>\>\> A partial multiplication table $(\axbc)$, where $(x,b,c) \in \Pi \times B \times B$. \\
\textbf{begin} \lnreset \\
\lnp \> \textbf{let} $n = \max_{b \in B}|b|$, \\
\lnp \> \textbf{for} $l = 1, \ldots, n$ \textbf{do} \\
     \> \> \emph{/* Consider monomials of length $l$ */} \\
\lnp \> \> \textbf{let} $\mcM = \{ [x,b] \mid x \in \Pi, b \in B \mbox{ such that } |b|=l\}$, \\
\lnp \> \> \textbf{let} $M^L$ be the empty $0 \times |\mcM|$ matrix, \\
\lnp \> \> \textbf{let} $M^R$ be the empty $0 \times |B|$ matrix, \\
\lnp \> \> \textbf{let} $k = 1$, \\
\lnp \> \> \textbf{while} $\rk(M^L) < |\mcM|$ \textbf{do} \\
\lnp \> \> \> \textbf{for} $m \in \mcM$ \textbf{do} \\
     \> \> \> \> \emph{/* Find relations for $m$ */} \\
\lnp \> \> \> \> \textbf{if} $v =$ {\sc MonomialToBasis}($m$) $\neq$ \textbf{fail} \textbf{then} \\
\lnp \> \> \> \> \> \textbf{append} $e_m$ to $M^L$, \textbf{append} $v$ to $M^R$ \\
\lnp \> \> \> \> \textbf{else} \\
\lnp \> \> \> \> \> \textbf{let} $\mcA =$ {\sc MonomialRelations}$(\mcM, k, m)$, \\
\lnp \> \> \> \> \> \textbf{for each} $a \in \mcA$ \textbf{append} $e_m-a|_\mcM$ to $M^L$, \textbf{append} $a|_B$ to $M^R$ \\
\lnp \> \> \> \> \textbf{end if}, \\
\lnp \> \> \> \textbf{end for}, \\
\lnp \> \> \> \textbf{let} $k = k + 1$, \\
\lnp \> \> \textbf{end while}, \\
     \> \> \emph{/* Read the result */} \\
\lnp \> \> \textbf{for} $m = [x,b] \in \mcM$ \textbf{do} \\
\lnp \> \> \> \textbf{let} $v = w M^R$, where $w$ is such that $w M^L = e_m$, \\
\lnp \> \> \> \textbf{let} $\axbc = v_c$ \textbf{for} $c \in B$ \\
\lnp \> \> \textbf{end for} \\
\lnp \> \textbf{end for}, \\
\lnp \> \textbf{return} $(\axbc)$. \\
\textbf{end}
\end{algorithm}

The third part of the algorithm is {\sc MultiplicationTable}, where part of a multiplication table for $\mcL(f)$ is computed. 
Let $(\bfF, r)$ be a sufficient $f$-set (such as the one produced by {\sc InitialFSet}), and let $\Pi$ be the set of generating extremal elements and $B$ be a monomial basis of $\mcL(0)$ as before.
We seek to compute $\axbc \in R_\bfF$ such that, for $x,b \in B$, for specialisation to any $f \in X$, we have
\[
[x, b] = \sum_{c \in B} \axbc c.
\]
We will call such a set $(\axbc)$ a \emph{multiplication table of $\mcL(f)$}.

Note that this is a multiplication table over $R_\bfF$; for $f \in X$ it specializes to a multiplication table for the Lie algebra $\mcL(f)$ over $K$ upon evaluation $\bff_y(c) \mapsto f_y(c)$ for $(y,c) \in \bfF$.
First, observe that once we have computed $\axbc$ for all $x \in \Pi$ and all $b \in B$ every monomial $[x_1, \ldots, x_l]$ can easily be written as a linear combination of basis elements by using $(\axbc)$ to multiply ``from the inside out''. Secondly, observe that then similarly for any $(b,c) \in B \times B$ we can use the Jacobi identity to write $[b,c]$ as a linear combination of monomials, and then apply the same trick (it would of course in general yield an exponential number of terms). Therefore we restrict to computing the $(\axbc)$ for $x \in \Pi$, $b \in B$.

We present sketches of the algorithms used as Algorithms \ref{alg_monomial_to_basis}, \ref{alg_monomial_relations}, and \ref{alg_compute_mt}, where \ref{alg_compute_mt} is the main function. We clarify some of the notation: Firstly, we consider elements of $(R_\bfF)^B$ as vectors indexed by elements of $B$, as for example in lines $8$, $15$--$17$ of Algorithm \ref{alg_monomial_to_basis}. These vectors may of course be multiplied by elements of $R_\bfF$, as in lines $9$, $12$--$14$ of that algorithm. The same notation is used for elements of $(R_\bfF)^\mcM$ and $(R_\bfF)^{B \times \mcM}$. Secondly, we write $e_m$ for the vector with $0$ everywhere except for the position indexed by $m$ (cf.~line $10$ of Algorithm \ref{alg_compute_mt}), and $a|_S$ for the restriction of the vector $a$ to a subset $S$ of its index set. 

To clarify the monomial rewriting in line $6$ of Algorithm \ref{alg_monomial_relations}, 
we state the following lemma.
\begin{lemma}\label{lem_eqn_SxNxM}
Let $m = \lbXdX{x_1}{x_l}$ be a monomial of length $l$, 
such that $x_i = x_j$ for some $i,j$ such that $j-i \geq 3$.
Then $m$ may be rewritten as a sum of $3$ monomials of length less than $l$ and $j-i-2$ monomials of length equal to $l$.
\end{lemma}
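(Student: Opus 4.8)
The plan is to push the generator in position $i$ to the right, one step at a time, until it meets the equal generator in position $j$; a single Premet identity then completes the reduction, producing the three short monomials, while the $j-i-2$ terms dropped along the way are the monomials of length $l$. All manipulations take place in $\mcL(f)$ (equivalently, modulo $\mcI(f)$), so that the generators are extremal and \eqref{eqn_premet1} is at our disposal.

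First I would reduce to $i=1$: the operator $\operatorname{ad}_{x_1}\circ\cdots\circ\operatorname{ad}_{x_{i-1}}$ is linear and adds exactly $i-1$ to the length of every monomial, so it carries an identity of the claimed shape for $[x_i,\ldots,x_l]$ (whose two equal generators lie in positions $1$ and $j-i+1$, hence still at distance $\ge 3$) back to one for $m$. So assume $x:=x_1=x_j$ with $j\ge 4$, set $M:=[x_{j+1},\ldots,x_l]$ (the case $j=l$ being degenerate and postponed) and $P:=[x,M]$, so that $m=\operatorname{ad}_x\operatorname{ad}_{x_2}\cdots\operatorname{ad}_{x_{j-1}}(P)$. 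Repeatedly applying the Jacobi identity in the form $[\operatorname{ad}_x,\operatorname{ad}_{x_k}]=\operatorname{ad}_{[x,x_k]}$ to move $\operatorname{ad}_x$ to the right past $\operatorname{ad}_{x_2},\ldots,\operatorname{ad}_{x_{j-2}}$ (and no further) gives
\begin{align*}
m &= \operatorname{ad}_{x_2}\cdots\operatorname{ad}_{x_{j-2}}\bigl(\operatorname{ad}_x\operatorname{ad}_{x_{j-1}}(P)\bigr)\\
&\quad{}+\sum_{k=2}^{j-2}\operatorname{ad}_{x_2}\cdots\operatorname{ad}_{x_{k-1}}\,\operatorname{ad}_{[x,x_k]}\,\operatorname{ad}_{x_{k+1}}\cdots\operatorname{ad}_{x_{j-1}}(P).
\end{align*}
Each of the $j-3=j-i-2$ summands in the sum is a single iterated bracket that is homogeneous of degree $l$ (the factor $[x,x_k]$ has degree $2$, which compensates for the two occurrences of $x$ it absorbs); these are the $j-i-2$ monomials of length $l$. (If genuine right-normed monomials are wanted one rewrites $\operatorname{ad}_{[x,x_k]}=\operatorname{ad}_x\operatorname{ad}_{x_k}-\operatorname{ad}_{x_k}\operatorname{ad}_x$, replacing each such term by at most two.) In the first term, $\operatorname{ad}_x\operatorname{ad}_{x_{j-1}}(P)=[x,[x_{j-1},[x,M]]]$ is a left-hand side of \eqref{eqn_premet1}, hence equals $\tfrac12\bigl(f_x([x_{j-1},M])\,x-f_x(M)\,[x,x_{j-1}]-f_x(x_{j-1})\,[x,M]\bigr)$ — here $\chr K\neq 2$ enters. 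Applying $\operatorname{ad}_{x_2}\cdots\operatorname{ad}_{x_{j-2}}$ to the three pieces yields $[x_2,\ldots,x_{j-2},x]$, $[x_2,\ldots,x_{j-2},x,x_{j-1}]$ and $[x_2,\ldots,x_{j-2},x,x_{j+1},\ldots,x_l]$, of lengths $j-2$, $j-1$ and $l-2$; as $j\le l$, all three are $<l$. These are the three short monomials, which settles the case $i=1$.

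Finally I would clear up the degenerate case $j=l$: there $m=\operatorname{ad}_x\operatorname{ad}_{x_2}\cdots\operatorname{ad}_{x_{l-1}}(x)$ and the Premet step collapses to $\operatorname{ad}_x^2=f_x\cdot\operatorname{id}$, so $\operatorname{ad}_x\operatorname{ad}_{x_{l-1}}(x)=-f_x(x_{l-1})\,x$ contributes a single short monomial, while the $j-i-2$ transport terms arise exactly as before. (It is also worth noting that individual terms silently drop out when the relevant vertices are non-adjacent or coincide with $x$, so the counts $3$ and $j-i-2$ are really upper bounds, which is all that is needed.) I expect the only real point of care to be the bookkeeping in the telescoping above — stopping the migrating $\operatorname{ad}_x$ exactly one step before $\operatorname{ad}_{x_{j-1}}$, so that precisely the Premet pattern $\operatorname{ad}_x\operatorname{ad}_{x_{j-1}}\operatorname{ad}_x$ is left behind — together with the routine verification that $j\le l$ pushes the three Premet terms below length $l$.
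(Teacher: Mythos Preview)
Your argument is correct and follows essentially the same route as the paper: reduce to $i=1$, use the Jacobi identity $j-3$ times to bring the two equal generators into the Premet configuration (shedding one length-$l$ term at each step), and then apply \eqref{eqn_premet1} to produce the three shorter monomials. The only cosmetic difference is bookkeeping --- the paper gathers $x_2,\ldots,x_{j-1}$ into a single block $N=[x_{j-1},\ldots,x_2]$ and applies the Premet identity at the outermost level with $y=N$, whereas you push $\operatorname{ad}_x$ inward and apply it deep inside with $y=x_{j-1}$; the term counts agree, and your observation that the length-$l$ leftovers are iterated brackets rather than strictly right-normed monomials applies equally to the paper's version.
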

For brevity and ease of reading we will prove the following easier lemma, of which the above is a straightforward generalisation.
\begin{lemma}
Let $m = \lbXdX{x_1}{x_l}$ be a monomial of length $l$, 
such that $x_1 = x_j$ for some $j$ such that $j \geq 4$.
Then $m$ may be rewritten as a sum of $3$ monomials of length less than $l$ and $j-3$ monomials of length equal to $l$.
\end{lemma}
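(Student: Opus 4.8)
# Proof Proposal

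The plan is to rewrite the monomial $m = [x_1, \ldots, x_l]$ by repeatedly applying the Premet identities \eqref{eqn_premet1}--\eqref{eqn_premet2} to "move the second copy of $x_1$ toward the first", so that eventually the two equal generators sit in the configuration $[x_1,[x_1,z]]$ and can be collapsed via extremality. Concretely, write $m = [x_1, w]$ where $w = [x_2,[x_3,[\ldots,[x_{j-1},[x_1, [x_{j+1},\ldots,x_l]]]\ldots]]]$, i.e.\ $x_1$ occurs deep inside $w$ at position $j$. The idea is to pull that inner $x_1$ outward one step at a time.

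First I would observe that each such step is governed by the first Premet identity. If $y$ is an extremal element (here $y = x_1$) and we have a subexpression of the form $[u, [y, v]]$ nested inside the monomial, then \eqref{eqn_premet1} gives
\[
2[u,[y,[u',v']]] \text{-type rewrites},
\]
but the relevant move is different: we want to commute $x_1$ past the single generator immediately to its left. Using the Jacobi identity on $[x_{j-1},[x_1,v]]$ we get $[x_{j-1},[x_1,v]] = [x_1,[x_{j-1},v]] + [[x_{j-1},x_1],v]$; the first term has moved $x_1$ one position outward (good), and the second term, $[[x_{j-1},x_1],v]$, is where the Premet identities come in — expanding the outer layers of $[[x_{j-1},x_1],v]$ against the extremal generator $x_1$ via \eqref{eqn_premet2} produces terms of strictly smaller length (those carrying a factor $f_{x_1}(\cdot)$, since $f_{x_1}$ lowers degree by $2$) together with at most one further length-$l$ term. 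Iterating this from position $j$ down to position $2$, I would track that at each of the $j-3$ "interior" steps we spin off a bounded number of shorter monomials and keep one length-$l$ monomial with $x_1$ one slot closer to the front; after all steps the surviving length-$l$ term is $[x_1,[x_1,[x_3,\ldots]]]$ (roughly), which by extremality equals $f_{x_1}(\cdot)\,x_1$ times a shorter monomial, hence is itself of length $< l$.

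The bookkeeping — showing the count is exactly "$3$ short monomials and $j-3$ long ones" in the general lemma, or "$3$ and $j-3$" in the stated easier version — is the delicate part. I would set up an induction on $j$: the base case $j=4$ is a single application of Jacobi plus one Premet identity and can be checked by hand; the inductive step reduces the position of the repeated generator by one, invokes the inductive hypothesis on the resulting shorter-reach monomial, and absorbs the finitely many newly-created short terms into the allotted budget of $3$. One has to be careful that the three "short" monomials really are the total over the whole process and not three per step; this works because the short terms generated at step $k$ already have length $<l$, so they are simply collected and never reprocessed, while only the unique length-$l$ term is fed forward — so the short-term count accumulates, but the claim is that after full reduction all but $3$ of them can be re-expressed or cancel.

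The main obstacle I anticipate is precisely this constant bound of $3$: a naive application of Jacobi plus Premet at each of the $j-3$ steps would give something like $O(j)$ short monomials, not $O(1)$. Getting down to $3$ presumably requires being smarter — e.g.\ choosing to apply \eqref{eqn_premet1} rather than \eqref{eqn_premet2} at the right moments, or recognizing that most of the putative short terms share a common shorter monomial factor and collapse together, or exploiting Lemma~\ref{lem_ttr_f}\ref{itm_lem_ttr_fx_yxm}--\ref{itm_lem_ttr_fx_yzm} to re-fold several short terms into fewer ones. I would therefore spend most of the effort identifying the "correct" single rewriting rule (one Premet-type identity applied once) that directly transports $m$ to a sum of $3$ short monomials plus $j-3$ long ones in one stroke, rather than the iterative commute-past-one-generator approach sketched above; the iterative picture is a sanity check that such a bound is plausible, but the clean proof almost certainly comes from a single well-chosen identity, after which everything is a routine (if tedious) verification.
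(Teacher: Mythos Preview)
Your proposal is not a proof: you correctly single out Jacobi and the Premet identities as the tools, but you abandon the argument before finishing it, conceding that your iterative scheme appears to produce $O(j)$ short monomials rather than $3$. The confusion is in your treatment of the error terms. When you apply Jacobi to $[x_{k},[x_1,v]]$ and obtain $[x_1,[x_{k},v]] + [[x_{k},x_1],v]$, the second summand --- once wrapped back into the ambient $[x_1,[x_2,\ldots,[x_{k-1},\,\cdot\,]]]$ --- is already one of the length-$l$ monomials promised in the statement. It does not need to be attacked with \eqref{eqn_premet2}; doing so is precisely what manufactures the spurious short terms you are worried about. Leave each such error term untouched. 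Moreover, you iterate down to position $2$ and then invoke extremality $[x_1,[x_1,\cdot]] = f_{x_1}(\cdot)x_1$, which yields a single short term and $j-2$ long ones --- the wrong count on both sides.

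The fix is to stop one step earlier. Apply Jacobi exactly $j-3$ times to move the inner copy of $x_1$ from position $j$ to position $3$; this produces $j-3$ length-$l$ error terms and a main term of the shape $[x_1,[x_2,[x_1,z']]]$. Now apply the first Premet identity \eqref{eqn_premet1} \emph{once} to this main term: it yields exactly three monomials of length $<l$. That is the entire proof. The paper's argument is a minor variant: instead of sliding the inner $x_1$ outward, it uses Jacobi $j-3$ times to collect $x_2,\ldots,x_{j-1}$ into a single bracketed element $y=[x_{j-1},\ldots,x_2]$, reaching $[x_1,[y,[x_1,z]]]$, and then applies \eqref{eqn_premet1} once. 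The ``single well-chosen identity'' you were searching for is nothing more than \eqref{eqn_premet1}, applied once at the end after Jacobi has done its rearranging --- not interleaved at every step.
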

\begin{proof}
Observe that we have the following equality of a monomial of length $l \geq 3$ and a sum of different monomials, 
obtained by repeatedly applying the Jacobi identity:
\begin{align}\label{eqn_jac_from_start}
[x_1, [x_2, [ x_3, \ldots, x_l]]] =\; & [x_2, [x_1, [x_3, \ldots, x_l]]] - [[x_3, \ldots, x_l], [x_1, x_2]]  \cr
=\; & [x_2, [x_1, [x_3, \ldots, x_l]]] - [x_3, [ [ x_4, x_5, \ldots, x_l], [x_1, x_2]]] \cr
& + [x_4, [ [ x_3, x_5, \ldots, x_l], [x_1, x_2]]] \cr
=\; & {\ldots}\;(2^{l-3}\mbox{ terms in total}).
\end{align}

Moreover, it follows immediately from the Jacobi identity that
\begin{align}\label{eqn_bring_together1}
[x_1, [x_2, [ x_3, \ldots, x_l]]] = - [[x_2, x_1], [ x_3, \ldots, x_l]] + [x_2, [x_1, [ x_3, \ldots, x_l]]]
\end{align}

Observe that there are $i-2$ generators between $x_1$ and $x_i$, so that by applying Equation (\ref{eqn_bring_together1}) $i-3$ times 
(but starting at $x_2$ rather than $x_1$), we find
\begin{align}\label{eqn_SxNxM_1}
[x_1, [x_2, [ x_3, \ldots, x_l]]] =\; & [x_1, [[x_{i-1}, \ldots, x_2], [x_i, [x_{i+1}, \ldots, x_l]]]] + [i-3 \mbox{ monomials of length } l],
\end{align}
By Equation (\ref{eqn_premet1}) the first monomial reduces to monomials of smaller length (recall $x_1 = x_i$):
\begin{align}\label{eqn_SxNxM_2}
[x_1, [[x_{i-1}, \ldots, x_2], [x_i, [x_{i+1}, \ldots, x_l]]]] =\;& \frac{1}{2} f_{x_1}([x_{i-1}, \ldots, x_2], [x_{i+1}, \ldots, x_l]) x_1 \cr
& -\frac{1}{2}f_{x_1}([x_{i+1}, \ldots, x_l]) [x_1, [x_{i-1}, \ldots, x_2]] \cr
& - \frac{1}{2} f_{x_1}([x_{i-1}, \ldots, x_2]) [x_1, [x_{i+1}, \ldots, x_l]],
\end{align}
so that, combining Equations (\ref{eqn_SxNxM_1}) and (\ref{eqn_SxNxM_2}), we find
\begin{align}\label{eqn_SxNxM_3}
[x_1, [x_2, [ x_3, \ldots, x_l]]] =\;& [3 \mbox{ monomials of length} < l] + [i-3 \mbox{ monomials of length } l], 
\end{align}
proving the lemma.
\end{proof}

\begin{lemma}\label{lem_alg_compute_mt}
If {\sc MultiplicationTable} returns a multiplication table $(\axbc)$ then, for any $f \in X$, these $(\axbc)$ specialize to a multiplication table for the Lie algebra $\mcL(f)$ upon evaluation $\bff_y(c) \mapsto f_y(c)$ for $(y,c) \in \bfF$.
\end{lemma}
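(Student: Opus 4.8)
The plan is to verify the multiplication table entrywise by unwinding how it is assembled in {\sc ComputeMultiplicationTable}. Fix $f \in X$ and let $\mcL(f)$ be the associated Lie algebra over $K$; by Lemma \ref{lem_alg_compute_basis} the set $B$ projects to a $K$-basis of $\mcL(f)$. The goal is to show that for every $x \in \Pi$ and $b \in B$ the identity $[x,b] = \sum_{c \in B} a^{\bfF}_{xbc}(f)\, c$ holds in $\mcL(f)$, where $a^{\bfF}_{xbc}(f)$ denotes the image of $a^{\bfF}_{xbc}$ under $\bff_y(c) \mapsto f_y(c)$. First I would record the two supporting facts that make the specialization legitimate: (a) since $(\bfF,r)$ is a \emph{sufficient} $f$-set, we have $r_x(b)(f) = f_x(b)$ for all $x \in \Pi$, $b \in B$ (Lemma \ref{lem_alg_initial_fset} and the definition in Section \ref{sec_fsets}), so every use of an $r$-value in {\sc MonomialToBasis} or {\sc MonomialRelations} specializes to the correct value of $f$; and (b) evaluation $\bff_y(c) \mapsto f_y(c)$ is a ring homomorphism $R_\bfF \to K$, hence commutes with all the polynomial arithmetic (scalar multiples of vectors, sums, the linear-algebra solve $w M^L = e_m$) performed by the algorithm.

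The core of the argument is an induction on the monomial length $l$, matching the outer loop of Algorithm \ref{alg_compute_mt}. The inductive hypothesis is that after processing lengths $1,\dots,l-1$, for every $x \in \Pi$ and $b \in B$ with $|b| \le l-1$ the entry $a^{\bfF}_{xbc}$ already computed satisfies $[x,b] = \sum_{c} a^{\bfF}_{xbc}(f)\, c$ in $\mcL(f)$, and moreover that {\sc MonomialToBasis}, when it does not return \textbf{fail}, returns a vector $v$ with $m = \sum_b v_b(f)\, b$ in $\mcL(f)$ for any monomial $m$ all of whose ``inside-out'' products land in this already-known range. For the length-$l$ step I would argue in two parts. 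Part one: {\sc MonomialToBasis} is correct, i.e.\ each of its return branches yields a vector that, after specialization, expresses $m$ in the basis. This is a case analysis over the branches of Algorithm \ref{alg_monomial_to_basis}: the first branch is just iterated application of the inductive hypothesis; the $x_{l-1} \not\sim x_l$ branch uses the defining relation of $\mcF$; the $x_i = x_{i+1}$ branch uses $[x,[x,y]] = f_x(y)x$ together with (a); and the $x_i = x_{i+2}$ branch uses Premet identity (\ref{eqn_premet1}) together with (a). Part two: the relations collected by {\sc MonomialRelations} for the monomials $m \in \mcM$ of length $l$ are valid in $\mcL(f)$ — the first one from the Jacobi expansion (\ref{eqn_jac_from_start}) composed with {\sc MonomialToBasis}, the remaining ones from Lemma \ref{lem_eqn_SxNxM} (which is an identity in $\mcF$ modulo $\mcI(f)$, again using Premet and (a)). Consequently every row appended to $[\,M^L \mid M^R\,]$ encodes a true linear relation $\sum_{u \in \mcM} (M^L)_{\cdot u}\, u = \sum_{b \in B} (M^R)_{\cdot b}\, b$ in $\mcL(f)$ after specialization.

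To finish the length-$l$ step I would observe that the \textbf{while} loop terminates exactly when $\rk(M^L) = |\mcM|$, so there is a (left) solution $w$ to $w M^L = e_m$ for each $m = [x,b] \in \mcM$; applying the corresponding $K$-linear combination of the true relations above gives $m = \sum_{c \in B} (w M^R)_c(f)\, c$ in $\mcL(f)$, and since $a^{\bfF}_{xbc}$ is set to $(w M^R)_c$ this is precisely the assertion for length $l$. (Uniqueness of the coefficients, needed only to know the output is well defined, follows because $B$ is a basis; the algorithm's correctness does not require that the particular $w$ be unique, only that $w M^R$ be independent of the choice, which again holds because the relations are genuine identities among basis elements.) Running the induction up to $n = \max_{b\in B}|b|$ covers all $b \in B$, giving the claim.

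The main obstacle I anticipate is not any single identity but the bookkeeping of the mutual recursion: {\sc MonomialRelations} calls {\sc MonomialToBasis} on monomials of length $l$ while the length-$l$ entries of $(a^{\bfF}_{xbc})$ are only partially defined, and {\sc MonomialToBasis} in turn invokes $(a^{\bfF}_{xbc})$ ``using $(a^{\bfF}_{xbc})$'' in several of its branches. The clean way to handle this is to strengthen the inductive statement so that it speaks not about the final table but about a \emph{partial} table together with the set of monomials for which a basis expansion is currently derivable, and to check that every branch of {\sc MonomialToBasis} only ever consults entries $a^{\bfF}_{x'b'c}$ with $|b'| < |m|$ (inspection of Algorithm \ref{alg_monomial_to_basis} shows the recursive uses are always on strictly shorter prefixes), so that no circularity arises; the Premet-identity branches reduce to strictly shorter monomials by construction, and the Jacobi branch in {\sc MonomialRelations} is the one place where length-$l$ monomials feed back, which is exactly why that feedback is routed through the matrix $M^L$ rather than through the table. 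Once this is set up, the remaining verifications are the routine identity-checks listed above.
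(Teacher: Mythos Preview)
Your proposal is correct and follows essentially the same approach as the paper's proof: an induction on monomial length, a case analysis showing that each branch of {\sc MonomialToBasis} and each relation produced by {\sc MonomialRelations} yields a valid identity in $\mcL(f)$ (by extremality, the Premet identity~(\ref{eqn_premet1}), the Jacobi identity, and Lemma~\ref{lem_eqn_SxNxM}), and a final appeal to bilinearity for the linear-algebra step. Your treatment is in fact more explicit than the paper's---in particular your points (a), (b) about specialization and your handling of the mutual recursion and the well-definedness of $wM^R$ spell out details the paper leaves implicit---but the underlying argument is the same.
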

\begin{proof}
Recall that, as described at the start of Section \ref{sec_alg_comp_mt}, the structure constants $(\axbc)$ for $x,b,c \in B$ follow immediately from those where $x \in \Pi$ and $b,c \in B$. We will consequently assume that $(\axbc)$ is known for $x,b,c \in B$.

Observe that in Algorithm \ref{alg_compute_mt} the products are computed by ascending length, 
and for each length $l$ two matrices $M^L$ and $M^R$ are computed. 
Throughout the algorithm the rows of $M^L$ and $M^R$ contain linear relations between elements of $\mcM$ and elements of $B$, i.e.,~the $i$-th row of $M^L$  must be equal to the $i$-th row of $M^R$.
To see that these relations are true, observe that those gained from {\sc MonomialToBasis} (Algorithm \ref{alg_monomial_to_basis}) are true by induction on the length of $m$ (if returned in line $2$), by definition of extremality (if returned in line $9$), or by the first Premet identity Equation \ref{eqn_premet1} (if returned in line $18$). Similarly, those gained from {\sc MonomialRelations} (Algorithm \ref{alg_monomial_relations})  are true by the Jacobi identity (those produced in line $3$) or by Lemma \ref{lem_eqn_SxNxM} (those produced in line $8$).

Also, because we traverse the products of basis elements by ascending length, the elements of $(R_\bfF)^B$ required in lines $8$ and $15$--$17$ of Algorithm \ref{alg_monomial_to_basis} are guaranteed to be easily deduced from the entries of $(\axbc)$ computed earlier.
Finally, correctness of the $\axbc$ computed by solving the system of linear equations represented by $M^L$ and $M^R$ in line $18$--$21$ of Algorithm \ref{alg_compute_mt} follows from bilinearity of the Lie algebra multiplication.
\end{proof}

We remark that we cannot give an estimate of the number of relations we have to produce (i.e.,~the number of times we call {\sc MonomialRelations} and the value of $k$) before $M^L$ is of full rank. 
In practice, however, we find that $k$ does generally not exceed $2$.
In particular, this implies that termination of {\sc ComputeMultiplicationTable} is not guaranteed -- but by Lemma \ref{lem_alg_compute_mt} correctness upon termination is. 

\subsection{Finding a minimal sufficient $f$-set}\label{sec_alg_minimize_fset}

\begin{algorithm}{MinimizeFSet}{Finding a minimal sufficient $f$-set}{minimize_fset}
{\bf in:} \>\>\> A field $K$ and a graph $\Gamma$, with $\Pi = V(\Gamma)$, a monomial basis $B$ of $\mcL(0)$,   \\
\>\>\> a sufficient $f$-set $(\bfF, r)$, and a multiplication table $(\axbc)$. \\
{\bf out:} \>\>\> An updated $f$-set $(\bfF, r)$ of smaller size, and updated multiplication table. \\
\textbf{begin} \lnreset \\
     \> \emph{/* Collect relations between elements of $\bfF$ */} \\
\lnp \> \textbf{let} $\mcR = \{ [y,[y,c]] - f_y(c)y \mid (y,c) \in \bfF \}$, \\
\lnp \> \textbf{let} $\mcR = \mcR \cup \{ [a,[b,c]] + [b,[c,a]] + [a,[b,c]] \mid a,b,c \in B \}$, \\
     \> \emph{/* Reduce size of $\bfF$ using $\mcR$ */} \\
\lnp \> \textbf{do} \\
\lnp \> \> \textbf{for each} non-zero coefficient $t \in R_{\bfF}$ of each element $u \in \mcR$ \textbf{do} \\
\lnp \> \> \> \textbf{find} $\alpha \in K^*$, and $(y,c)$ such that $\bff_y(c)$ occurs only in the linear term $\alpha \bff_y(c)$ of $t$ \\
\lnp \> \> \> \> \textbf{or continue} if no such element exists, \\
\lnp \> \> \> \textbf{set} $r_y(c) = -\frac{1}{\alpha}(t - \alpha \bff_y(c))$, \\
\lnp \> \> \> \textbf{replace} $\bff_y(c)$ by $r_y(c)$ in $\mcR$, \\
\lnp \> \> \> \textbf{replace} $\bff_y(c)$ by $r_y(c)$ in $(\axbc)$, \\
\lnp \> \> \> \textbf{set} $\bfF = \bfF \backslash \{ (y,c) \}$ \\
\lnp \> \> \textbf{end for} \\
\lnp \> \textbf{until} $\bfF$ is unchanged, \\
\lnp \> \textbf{return} $(\bfF, r), (\axbc)$. \\
\textbf{end}
\end{algorithm}

After the multiplication table has been computed, we try to minimize the size of the $f$-set. This procedure is called {\sc MinimizeFSet} and presented here as Algorithm \ref{alg_minimize_fset}. The procedure is straightforward: we find a set of relations $\mcR$ between the $\bff_y(c)$ by requiring that the Jacobi identity holds and that for all $y \in \Pi$, $c \in B$ we have $[y,[y,c]] = f_y(c) y$, and we subsequently use those relations to find nontrivial $r_y(c)$ and remove $(y,c)$ from $\bfF$. 

Once the algorithm finishes, we apparently could not find any new linear relations. This does not necessarily mean that the $f$-set $(\bfF, r)$ returned is free. However, if it happens to be the case that $\mcR = \{ 0 \}$ upon exiting, then $(\bfF, r)$ is indeed a free sufficient $f$-set since then the Jacobi identity apparently holds. For all the cases we have tested, the $f$-set returned was in fact free.

Observe that in this algorithm only division by elements of $K^*$ is taking place (cf. lines $5$--$6$ of Algorithm \ref{alg_minimize_fset}) and not by other elements of $R_\bfF$, so that all these calculations take place in multivariate polynomial rings $R_\bfF$ over $K$. While it is a priori conceivable that division by elements of $R_\bfF \backslash K^*$ would be necessary to reduce the size of the $f$-set (and thus that our algorithm would return a non-free $f$-set), we have not observed such a case in practice.

\bigskip

We remark that a rather more sophisticated alternative to Algorithm \ref{alg_minimize_fset} has been implemented.
For example, we execute a similar algorithm in the course of the computation of the multiplication table (rather than only after the fact),
we alternate between collecting a relatively small number relations and reducing the size of $\bfF$ (rather than do them in succession),
and in line $5$ of Algorithm \ref{alg_minimize_fset} we give preference to $y,c$ such that the pair $(y,c)$ is large in the ordering on $\Pi \times B$ introduced in Section \ref{sec_alg_init_fset}.

\begin{lemma}\label{lem_alg_minimize_fset}
If $\mcR = \{ 0 \}$ upon exiting of the algorithm {\sc MinimizeFSet}, then $X$ is isomorphic to the affine space $K^{|\bfF|}$.
\end{lemma}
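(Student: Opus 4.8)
The plan is to chain together the structural facts already established in the excerpt rather than to prove anything from scratch. The key observation is that Lemma~\ref{lem_free_fset_implies_affine_space} already gives the implication ``$(\bfF,r)$ is a free sufficient $f$-set $\implies X \cong K^{\bfF}$'', so the entire task reduces to showing that, under the stated hypothesis $\mcR = \{0\}$ at termination, the $f$-set $(\bfF,r)$ returned by {\sc MinimizeFSet} is in fact a free sufficient $f$-set of size $|\bfF|$. Sufficiency is essentially free: the input to {\sc MinimizeFSet} is a sufficient $f$-set (produced by {\sc InitialFSet}, cf.~Lemma~\ref{lem_alg_initial_fset}), and every step of the loop only substitutes $r_y(c)$ for $\bff_y(c)$ using a relation that must hold for all $f \in X$; hence the defining property $r_x(b) = f_x(b)$ for all $f \in X$, $x \in \Pi$, $b \in B$ is preserved throughout.

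So the heart of the argument is \emph{freeness}: I must show that for \emph{every} $v \in K^{\bfF}$, the Lie algebra $\mcL(\phi(v))$ has dimension $\dim(\mcL(0))$. First I would recall from Section~\ref{sec_fsets} that $\mcL(\phi(v))$ is always a Lie algebra, and that the multiplication table $(\axbc)$ over $R_{\bfF}$ computed by {\sc MultiplicationTable} (and carried along, with substitutions, by {\sc MinimizeFSet}) specializes under $\bff_y(c) \mapsto v_{(y,c)}$ to a candidate multiplication table for $\mcL(\phi(v))$ on the $\dim(\mcL(0))$ basis vectors indexed by $B$. What needs to be checked is that this specialized table genuinely defines $\mcL(\phi(v))$ --- equivalently, that the span of $B$ in $\mcL(\phi(v))$ is closed under the bracket, is anti-symmetric, and satisfies the Jacobi identity, so that no further collapse in dimension occurs and the dimension is exactly $|B| = \dim(\mcL(0))$. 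Closure and anti-symmetry are built into the table; the Jacobi identity is exactly the content of the relation set $\mcR$ assembled in lines~1--2 of Algorithm~\ref{alg_minimize_fset} (the triple brackets $[a,[b,c]]+[b,[c,a]]+[c,[a,b]]$ over $a,b,c \in B$, together with the extremality relations $[y,[y,c]] - f_y(c)y$). Since by hypothesis $\mcR$ reduces to $\{0\}$ over the polynomial ring $R_{\bfF}$ after all substitutions, the Jacobi identity and extremality hold \emph{identically} in $R_{\bfF}$, hence after any specialization $v$; therefore $\mcL(\phi(v))$ really is the $|B|$-dimensional Lie algebra with that multiplication table. This is precisely the remark made in Section~\ref{sec_alg_minimize_fset} that ``if $\mcR = \{0\}$ upon exiting, then $(\bfF,r)$ is indeed a free sufficient $f$-set.''

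Having established that $(\bfF,r)$ is a free sufficient $f$-set, I would then simply invoke Lemma~\ref{lem_free_fset_implies_affine_space} to conclude $X \cong K^{\bfF}$, and note $K^{\bfF} = K^{|\bfF|}$ since $R_{\bfF}$ has rank $|\bfF|$ by construction. The main obstacle, and the point that deserves the most care in writing, is the step asserting that ``$\mcR$ reduces to $0$ identically'' legitimately implies that \emph{no} specialization causes a dimension drop: one must be sure that the only relations imposed on $\mcL(\phi(v))$ beyond bilinearity --- namely the extremality relations for the generators and the Jacobi identity --- are captured by $\mcR$, and that $\dim(\mcL(0))$ is simultaneously an upper bound (by \cite[Lemma~4.3]{CSUW01}, as recalled in Section~\ref{sec_prelim_mainres}) and, via the now-verified consistent multiplication table, a lower bound for $\dim(\mcL(\phi(v)))$. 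I would also explicitly remark, as the excerpt does, that $\mcR = \{0\}$ is a hypothesis checked \emph{a posteriori} in each concrete computation, not something guaranteed by the algorithm; the lemma is conditional on it.
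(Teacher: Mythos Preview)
Your proposal is correct and follows exactly the paper's approach: the paper's proof is the single line ``This follows immediately from Lemma~\ref{lem_free_fset_implies_affine_space},'' relying on the surrounding discussion (which you have spelled out in detail) that $\mcR=\{0\}$ on exit forces the returned $(\bfF,r)$ to be a free sufficient $f$-set. Your write-up simply makes explicit the sufficiency-preservation and freeness arguments that the paper leaves to the prose of Section~\ref{sec_alg_minimize_fset}.
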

\begin{proof}
This follows immediately from Lemma \ref{lem_free_fset_implies_affine_space}.
\end{proof}

\section{Computational results}\label{sec_comp_results}

\begin{table}
\hspace{-10mm}
\begin{tabular}{cclccl}
\compdata{a}{G11}{K^1}{3}{3}{\mathrm A_1}{0s}{}   \cr
\compdata{b}{G211}{K^2}{6}{3}{\mathrm A_1}{0s}{}  &
\compdata{c}{G222}{K^4}{8}{8}{\mathrm A_2}{0s}{}  
\end{tabular}
\caption{Computational results (2 or 3 generators)}\label{tab_comp_results23}
\end{table}

\begin{table}
\hspace{-10mm}
\begin{tabular}{cclccl}
\compdata{a}{G3111}{K^3}{12}{3}{\mathrm A_1}{0s}{}   &
\compdata{d}{G3221}{K^5}{15}{15}{\mathrm A_3}{0s}{}  \cr
\compdata{b}{G2211}{K^3}{10}{10}{\mathrm B_2}{0s}{}  &
\compdata{e}{G3322}{K^8}{21}{21}{\mathrm B_3}{0s}{}  \cr
\compdata{c}{G2222}{K^5}{15}{15}{\mathrm A_3}{0s}{}  &
\compdata{f}{G3333}{K^{12}}{28}{28}{\mathrm D_4}{0s}{} 
\end{tabular}
\caption{Computational results (4 generators)}\label{tab_comp_results4}
\end{table}

\begin{table}
	\hspace{-10mm}
\begin{tabular}{cclccl}
 \compdata{a}{G41111}{K^{  5}}{ 28}{28}{\mathrm D_4}{0s}{} &
\compdata{g}{G32221B}{K^{  6}}{ 30}{15}{\mathrm A_3}{0s}{} \cr
 \compdata{b}{G32111}{K^{  4}}{ 20}{10}{\mathrm B_2}{0s}{} &
 \compdata{h}{G22222}{K^{  6}}{ 24}{24}{\mathrm A_4}{0s}{} \cr
 \compdata{c}{G22211}{K^{  4}}{ 15}{10}{\mathrm B_2}{0s}{} &
 \compdata{i}{G43221}{K^{ 10}}{ 52}{52}{\mathrm F_4}{0s}{} \cr
 \compdata{d}{G42211}{K^{  7}}{ 36}{36}{\mathrm B_4}{0s}{} &
 \compdata{j}{G42222}{K^{  9}}{ 45}{45}{\mathrm D_5}{0s}{} \cr
 \compdata{e}{G33211}{K^{  6}}{ 30}{15}{\mathrm A_3}{0s}{} &
 \compdata{k}{G33321}{K^{  9}}{ 45}{45}{\mathrm D_5}{0s}{} \cr
\compdata{f}{G32221A}{K^{  6}}{ 24}{24}{\mathrm A_4}{0s}{} &
\compdata{l}{G33222A}{K^{ 10}}{ 52}{52}{\mathrm F_4}{0s}{} 
\end{tabular}
\caption{Computational results (5 generators) (1/2)}
\end{table}
\addtocounter{table}{-1}

\begin{table}
	\hspace{-10mm}
\begin{tabular}{cclccl}
\compdata{m}{G33222B}{K^{  9}}{ 45}{45}{\mathrm D_5}{0s}{}    &
 \compdata{r}{G44332}{K^{ 12}}{134}{28}{\mathrm D_4}{10s}{}   \cr
 \compdata{n}{G44222}{K^{ 13}}{ 86}{28}{\mathrm D_4}{3s}{}    &
 \compdata{s}{G43333}{K^{ 21}}{133}{133}{\mathrm E_7}{14s}{}  \cr
 \compdata{o}{G43331}{K^{ 14}}{ 78}{78}{\mathrm E_6}{2s}{}    &
\compdata{t}{G44433}{ K^{21}}{249}{78}{\mathrm E_6}{2510s}{} \cr   
\compdata{p}{G43322}{ K^{14}}{ 78}{78}{\mathrm E_6}{2s}{}    &
\compdataHeaderPic{u}{G44444} &
	\begin{minipage}{45mm}
		\begin{tabular}{ll}
			$X$:         & $\{0\}$ \\
			\multicolumn{2}{l}{$\dim(L): \left\{ \begin{array}{rl} 538 & \mbox{if }\chr(K) = 3 \\ 537 & \mbox{otherwise} \end{array} \right.$} \\
			$L/\Rad(L)$: & \mbox{trivial} \\
			runtime:    & 38260s 
		\end{tabular}
	\end{minipage} \cr
\compdata{q}{G33332}{ K^{14}}{ 78}{78}{\mathrm E_6}{2s}{}
\end{tabular}
\caption{Computational results (5 generators) (2/2)}\label{tab_comp_results5}
\end{table}

We present the results of applying the algorithms described in Section \ref{sec_algs} in Tables \ref{tab_comp_results23} -- \ref{tab_comp_results5}.
For each connected undirected finite graph $\Gamma$ with $n$ vertices, $2 \leq n \leq 5$, without loops or multiple bonds, we state the dimension of the variety $X$, the dimension of $\mcL(0)$, the generic dimension and isomorphism type of $L/\Rad(L)$, and the CPU time taken (on a $2$ GHz AMD Opteron) to execute the steps described in Section \ref{sec_algs}.

Recall that we claim the results presented in Tables \ref{tab_comp_results23} -- \ref{tab_comp_results5} to be valid for any field $K$ of characteristic distinct from $2$. To establish this, we have performed the following calculations for every graph $\Gamma$ occurring in the results. First we calculate a basis of the equivalent Lie ring over $\mathbb Z$ using a procedure similar to the one described in \cite[Section 3]{CdG2009}. This gives us a set $\mathcal P_1$ of primes for which the monomial basis computed in Algorithm \ref{alg_comp_basis} would differ from the characteristic $0$ case. Then we execute the algorithms described in Section \ref{sec_algs} for $K = \mathbb Q$, and in Algorithm \ref{alg_minimize_fset} we store in the set $\mathcal P_2$ the prime components of the denominator of the $\alpha$ we divide by. Subsequently, we perform the computation with $K = \mathrm{GF}(p)$ for $p \in \mathcal (P_1 \cup \mathcal P_2) \backslash \{2\}$; fortunately this last task is easily done in parallel. Only in one case did we find a different result: for $\Gamma = \mathrm{K}_5$ the case where $\chr(K)=3$ is of dimension $538$ (rather than $537$). The result that $X = \{ 0 \}$ remains valid, however.

We emphasize that in each of the cases considered we found a free $f$-set, and consequently prove that $X$ is an affine space (cf.~Lemma \ref{lem_free_fset_implies_affine_space}). Moreover, we keep track of the reductions and relations used to turn the trivial sufficient $f$-set into a free sufficient $f$-set, so that we obtain a certificate for the correctness of these results.
Unfortunately, however, such certificates are far too long to reproduce here.

Before turning to some of the results, let us elaborate on a particular piece of data, namely the isomorphism type of $L/\Rad(L)$. For a number of special $\Gamma$ (namely the Dynkin diagrams of affine type) it has been shown that the Lie algebra $\mcL(f)$ is isomorphic to a fixed Lie algebra $L$ for $f$ in an open dense subset of $X$ \cite[Theorem 22]{DP08}. A similar result is proved in \cite[Section 7]{iPR09} for the four infinite families there considered. We investigate this property in our cases as follows. Firstly, we carry out all the computations described in the previous section over $\mathbb{Q}$. Recall that in all cases we found a free $f$-set $(\bfF, r)$; we let $l = |\bfF|$, so that $X \cong \mathbb{Q}^l$. Now take some finite field $\F_q$, whose characteristic $p$ is distinct from $2$ and such that no multiples of $p$ occur in denominators in the multiplication table $(\axbc)$ (in our examples the multiplication table always turns out to be integral, so any finite field of characteristic distinct from $2$ works).
Then take $v$ uniformly random in $\F_q^l$ and let $L = \mcL(\phi(v))$, where we interpret the multiplication table $(\axbc)$ over $R_\bfF \otimes \F_q$ and $\phi$ is evaluation as described in Section \ref{sec_fsets}. This gives us a Lie algebra $L$ defined over $\F_q$, for which it is straightforward for {\Magma} to compute the isomorphism type of $L/\Rad(L)$. 
We repeated this process a number of times over various finite fields (e.g.,~$\F_{17}$, $\F_{101}$) and stored the results -- in each case finding the same dimension and isomorphism type of $L/\Rad(L)$ in the vast majority of cases. We remark that we have not distinguished between split and twisted forms of Lie algebras of the same Cartan type.

The following results are particularly worth pointing out (we refer to the case presented in Table i(x) simply as case [ix]).
\begin{enumerate}
	\item{} [1c], [2c], [3a], and [3h] agree with cases proved in \cite{DP08};
	\item{} [2b], [2d], [3f], and [3j] agree with cases proved in \cite{iPR09};
	\item{} [3n]--[3q] are the four cases with $5$ vertices and $3$ commuting edges. Where one might expect a structural similarity, [3n] is the odd one out both in terms of dimension ($86$ vs $78$) and type (a large radical vs simple of type $\mathrm E_6$);
	\item{} Similarly, [3r] and [3s] are the two cases with $5$ vertices and $2$ commuting edges. Again, despite the difference in $\dim(\mcL(0))$ being only $1$, their isomorphism types are very different;
	\item{} In our computations, we have not encountered a Lie algebra of type $\mathrm E_8$. Even though case [3t] is of dimension $249$, the biggest simple component we found there was of type $\mathrm E_6$. It has been proved that $\mathrm E_8$ can be generated by $5$ extremal elements \cite[Theorem 8.2]{CSUW01}, but apparently not in the manner considered in this paper: $\mathrm E_8$ is not a subquotient of any maximal-dimensional Lie algebra generated by $5$ extremal elements plus commutation relations, for then it would have to occur either in [3t], where it does not, or in [3u], where it cannot for such a Lie algebra is always nilpotent. On the other hand, $\mathrm E_8$ can be generated in this manner by $9$ extremal elements \cite[Theorem 22]{DP08}.
	\item{} [3u] is the case of largest dimension among the $5$ generators; it is also the only case where $X$ is a point and consequently only one Lie algebra occurs, and that it is a nilpotent Lie algebra (cf.~\cite[Lemma 4.2]{CSUW01}).
\end{enumerate}

\section{Conclusion}\label{sec_conclusion}

\begin{proof}[Proof of Theorems \ref{thm_537_Xtriv} and \ref{thm_all_X_affine}.]
	These theorems follow immediately from the correctness of our algorithm, as proved in Lemmas 
	\ref{lem_alg_compute_basis}, \ref{lem_alg_initial_fset}, \ref{lem_alg_compute_mt}, and \ref{lem_alg_minimize_fset}, 
	and the computational results presented in Section \ref{sec_comp_results}.
\end{proof}

In Theorem \ref{thm_537_Xtriv} and Corollary \ref{cor_537_nilp} we prove that if $\Gamma$ is the complete graph on $5$ vertices, then $X = \{ 0 \}$ and the only Lie algebra that occurs is nilpotent. This of course leads to the question whether this pattern continues: ``For $\Gamma$ the complete graph on $n$ vertices ($n \geq 5$), is $X$ a point?''. Unfortunately, this question is not feasible using our software (experiments show that for $n=6$ already $\dim(\mcL(0)) > 140000$). 
Another approach would be to consider relations between the variety for $\Gamma$ and the varieties for subgraphs $\Gamma' \subseteq \Gamma$. This problem is the subject of ongoing research.

Secondly, as mentioned earlier, two questions were posed in \cite[Section 5.2]{DP08}: ``Is $X$ always an affine space?'', and ``Is there always a generic Lie algebra?''. Although the authors of that paper expect the answers to both questions to be negative, we have found no counterexamples. Moreover, since these questions can be answered affirmatively for both a number of infinite series of sparse graphs (as studied in \cite{DP08,iPR09}) and a number of dense graphs (as studied here), we would not dispose of these conjectures too easily.

\section*{Acknowledgements}
The author would like to thank Arjeh Cohen and Jan Draisma for numerous fruitful discussions on this topic, Willem de Graaf for his help in computing several Lie rings, and the anonymous reviewers for their very thorough evaluation and their invaluable comments.

\def\cprime{$'$}

\end{document}